\documentclass[12pt,reqno]{amsart}

\usepackage{graphicx}
\usepackage{mathrsfs}
\usepackage{amsthm}
\usepackage{amssymb}
\usepackage{amsmath,amsthm,amsfonts}
\usepackage{mathabx}
\usepackage{hhline}
\usepackage{textcomp}
\usepackage{amsmath,amscd}
\usepackage[all,cmtip]{xy}
\usepackage{mathtools}
\usepackage[margin=1in]{geometry}
\usepackage{lipsum}
\usepackage{extarrows}
\usepackage[displaymath]{lineno}
\usepackage[singlespacing]{setspace}%
\setcounter{MaxMatrixCols}{30}
\usepackage[colorlinks=true,breaklinks=true,bookmarks=true,urlcolor=blue,
citecolor=blue,linkcolor=blue,bookmarksopen=false,draft=false]{hyperref}

\providecommand{\U}[1]{\protect\rule{.1in}{.1in}}

\theoremstyle{definition}
\newtheorem{theorem}{Theorem}[section]
\newtheorem{lemma}{Lemma}[section]
\newtheorem{remk}{Remark}[section]

\newtheorem{defn}{Definition}[section]
\newtheorem*{theorem*}{Theorem}
\newtheorem{cor}{Corollary}[section]

\numberwithin{equation}{section}



\DeclareMathAlphabet{\mathpzc}{OT1}{pzc}{m}{it}

\newcommand{\rr}{\mathbb{R}}
\newcommand{\nn}{\mathbb{N}}

\newcommand{\sn}{\parallel}

\DeclareMathOperator{\dd}{D}

\DeclareMathOperator{\Lip}{Lip_{loc}}
\newcommand{\bb}{\mathcal{B}}
\begin{document}

\title{A Global Diffeomorphism Theorem for Fr\'{e}chet spaces}

\author{Kaveh Eftekharinasab}
\thanks{The author would like to thank the reviewer for the valuable comments  }
\address{Topology lab.  Institute of Mathematics of National Academy of Sciences of Ukraine, Tereshchenkivska st. 3, Kyiv, 01601 Ukraine}

\email{kaveh@imath.kiev.ua}


\subjclass[2010]{57R50;46A04;46A50 }

\date{}

\keywords{Chang Palais-Smale condition, Mountain Pass Theorem, Global difeomorphism, Clarke's generlized
gradients}

\begin{abstract}
We give sufficient conditions for a $ C^1_c $-local diffeomorphism between Fr\'{e}chet spaces to be a global one.   
We extend the Clarke's theory of generalized gradients to the more general setting of Fr\'{e}chet spaces. As a consequence, we  define the Chang Palais-Smale condition for Lipschitz functions and show that a function which is bounded below and satisfies the Chang Palais-Smale condition at all levels is coercive. We prove a version of the mountain pass theorem for Lipschitz maps in the Fr\'{e}chet setting and show that along with the Chang Palais-Smale condition we can obtain a global diffeomorphism theorem.   
\end{abstract}

\maketitle
The problem of finding sufficient conditions for a local diffeomorphism to be a global one has been investigated
by many authors in the framework of Banach spaces, cf.~\cite{lenght} and references therein. 
But it has not been the subject of study for more general Fr\'{e}chet spaces. In~\cite{k1} we found sufficient conditions
 that indicate when smooth tame maps are global diffeomorphisms. The purpose of this paper is to find weakened conditions for $ C^1_{c} $-maps. To do this, we will apply the Clarke's theory of generalized gradients. By means of this theory the problem of global invertibility  of non-differentiable maps has been studied in Banach spaces by many authors cf.~\cite{john,4,ka}, but nothing exists for Fr\'{e}chet spaces. 
 
 The calculus of generalized gradients involves Lipschitz maps also on dual spaces weak$^{*}$ topology suffices. Thus, we may expect to carry it over to the Fr\'{e}chet setting without much difficulty. To this end, we start with the definition of the Clarke's subdifferential of Lipschitz functions and present some of its basic properties. We then naturally formulate the Palais-Smale condition in the sense of Chang~\cite{4}.  By means of Ekeland's variational
principle we prove that any lower bounded function that satisfies the Chang Palais-Smale condition at all levels is coercive.

As pointed out by Kartiel~\cite{ka}, mountain pass theorems can be used to obtain global homemorphism theorems. These  theorems  has many extensions and variations particularly, Shuzhong~\cite{shi} generalizes this theorem to locally Lipschitz functions on Banach spaces. Following his ideas  we prove the mountain pass theorem
for Fr\'{e}chet spaces, see Theorem~\eqref{mp}. The desired advantage of this theorem is that an obtained convergent subsequent satisfies the Chang Palais-Smale condition.

Finally, we prove the main theorem which roughly states that if $ \varphi $
is a $ C^1_c $-locally diffeomorphism of Fr\'{e}chet spaces and if for an appropriate coercive auxiliary function $ \imath $, a function
$x \mapsto \imath (\varphi (x) - y) $ for any $ y $ satisfies the Chang Palais-Smale condition then $ \varphi $ is a global diffeomorphism. 

It might not always be easy even for Banach spaces to check if a map satisfies the Chang Palais-Smale condition, therefore, for the Banach case another approach  which is based on the path lifiting property has been developed, see~Plastock~\cite{plas}. A potential line for further studies would be the generalization of this approach for Fr\'{e}chet spaces as well. 

Despite the fact that the theory of  Fr\'{e}chet spaces has a remarkable relation with both linear and non-linear problems  but not many methods for solving different types of differential equations are known. 
Our motivation here has an eye on future applications to ordinary differential equations. It is known that each global existence theorem for an autonomous system in Banach spaces has a correspondence with a global inversion theorem. Analogously, we would expect that such theorems will play notable role in the theory of differential equations in Fr\'{e}chet spaces.           
\section{Clarke's subdifferential}
In this section we extend some  basic concepts of the generalized gradients calculus to the Fr\'{e}chet setting. In most cases the proofs  have elementary calculus nature  and similar to their Banach analogues so we merely give references. 
  
We denote by $ F$ a Fr\'{e}chet space and by $ F' $ its dual. We assume that the topology of $ F $ is defined by
an increasing sequence of seminorms $ \sn \cdot \sn^1_F \leqq \sn \cdot \sn^2_F \leqq \cdots$. A translation-invariant complete metric inducing the same topology on $ F $ can then be defined by 
\begin{equation}\label{metric}
d_F (f,g) = \sum_{i=0}^{\infty} \dfrac{\sn f -g \sn^i_F}{1+\sn f -g \sn^i_F}.
\end{equation}
 A ball with  center $ x $ and  radius $ r $ in 
$ F $ and $ F' $ is denoted by $ B_r(x) $ and $ B'_r(x) $, respectively. The boundary of a set $ U $ is denoted by $ bd \, U $.
We will use the Keller's notion of $ C^k_c $-maps, see~\cite[Definition 2.2]{k2}.

 The weak topology $ \sigma (F,F') $ on $ F $ is given by the fundamental system of seminorms
\begin{equation*}
\rho_{\phi'}(x) \coloneq \sup_{y \in  \phi'}  \mid y(x) \mid,
\end{equation*}
where $ \phi' $ runs through the set $ \Phi' $ of finite subsets of the dual space $ F' $. The weak$^*$ topology $ \sigma (F',F) $ on $ F' $ is given by the fundamental system of seminorms
\begin{equation*}\label{we}
\rho_{\phi}(y) \coloneq \sup_{x \in  \phi}  \mid y(x) \mid,
\end{equation*}
where $ \phi $ runs through the set $ \Phi $ of  finite subsets of $ F $. Let $\langle .,.\rangle  $ be the dual pairing between $ F $ and $ F' $.

Let $ \Lip (F,\rr)$ be the set of all locally Lipschitz functions on $ F $ and $ \varphi \in \Lip (F,\rr)$. As in~\cite{1} we define for each $ f \in F $ the generalized directional  derivative, denoted by $ \varphi^{\circ}(f,g)  $, in the direction $ g \in F$ by
$$ \varphi^{\circ}(f,g)  \coloneq \limsup_{h \rightarrow f, t\downarrow 0}  \frac{\varphi(h+tg)-\varphi(h)}{t}, \quad t \in \rr, h \in F.$$
It can be easily seen that the function $ f\rightarrow  \varphi^{\circ}(f,g)$ is locally Lipschitz, positively homogeneous
and sub-additive. For any $ f \in F $ we define the Clarke's subdifferential of $ \varphi $, denoted by $ \partial \varphi $, as follows:
$$
\partial \varphi (f) \coloneq \Big\{ f' \in F' \mid (\forall g \in F) \langle f',g \rangle \leqq \varphi^{\circ}(f,g) \Big \}.
$$
\begin{lemma}\label{compact}
	\begin{itemize}
		\item [(a)] $ \varphi^{\circ}(f;g) $ is upper semi-continuous as a function of $ (f,g) $ and, as a function of $ g $ alone, is Lipschitz on $ F $.
		\item[(b)]$ \varphi^{\circ}(f,-g) = (-\varphi^{\circ})(f,g) $.
		\item [(c)] For every $ g \in F $, $ \varphi^{\circ} (f;g) = \max  \{ \langle h,g\rangle : h \in \partial \varphi(f) \}$.
		\item[(d)] \label{g}$ g \in \partial \varphi(f) $ if and only if $ \varphi^{\circ}(f;h)\geq \langle g,h\rangle, \, \forall h \in F $. 
		\item [(e)] \label{f} Suppose sequences $ (f_j) \subset F $ and $ (g_j) \subset F'$ are such that $ g_j \in \partial \varphi(f_j) $. If $ f_j \rightarrow f $ and 
		$ g $ is a cluster point of  $ (g_j) $, then $ g \in \partial (f) $.
		\item [(f)]$\partial(t \varphi)(f) =t \partial \varphi (f), \, \forall t \in \rr$.
		\item[(g)] If $ f $ is a local minimum of $ \varphi $, then $ 0 \in \partial \varphi(f) $.
		\item [(h)] $\partial (\varphi + \psi)(f) \subset  \partial \varphi (f)+ \partial \psi (f)$.
		
       \end{itemize}
\end{lemma}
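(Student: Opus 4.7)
The plan is to transcribe Clarke's Banach-space proofs (cf.~\cite{1}) to the Fr\'echet setting. Two points must be handled carefully: (i) the local Lipschitz condition on $\varphi$ must yield an estimate of $\varphi^{\circ}(f,\cdot)$ by a single continuous seminorm $\sn\cdot\sn^n_F$, and (ii) the weak*-compactness of $\partial\varphi(f)$ must come from the Banach--Alaoglu polar theorem for Fr\'echet spaces. With these in hand, parts (a), (b), (d)--(g) are short calculus-style arguments, while (c) and (h) rest on Hahn--Banach.

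For (a), I fix $(f,g)$, take sequences $(f_n,g_n)\to(f,g)$, and choose $h_n$ with $d_F(h_n,f_n)<1/n$ and $t_n\downarrow 0$ so that the difference quotient comes within $1/n$ of $\varphi^{\circ}(f_n,g_n)$. Adding and subtracting $\varphi(h_n+t_ng)$ and invoking the local Lipschitzness of $\varphi$ exchanges $g_n$ for $g$ at vanishing cost, which gives upper semi-continuity; Lipschitzness in $g$ alone for fixed $f$ is immediate from the local Lipschitzness of $\varphi$ near $f$. Part (b) follows from the substitution $h':=h-tg$ in the limsup, and (d) is a direct rewriting of the definition of $\partial\varphi(f)$.

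The heart of the lemma is (c). I plan to obtain the max formula by a Hahn--Banach extension: the linear map $sg_0\mapsto s\,\varphi^{\circ}(f,g_0)$ on $\rr g_0$ is dominated by the sublinear continuous functional $g\mapsto\varphi^{\circ}(f,g)$ (continuity supplied by (a)), so it extends to an $h\in F'$ attaining $\varphi^{\circ}(f,g_0)$ at $g_0$ and sitting in $\partial\varphi(f)$. Convexity and weak*-closedness of $\partial\varphi(f)$ are pointwise; for weak*-compactness, the seminorm bound $\varphi^{\circ}(f,g)\leq L\sn g\sn^n_F$ from (a) places $\partial\varphi(f)$ inside the polar of a neighborhood of $0$ in $F$, and such polars are $\sigma(F',F)$-compact by Banach--Alaoglu. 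Part (e) then follows by passing to the weak* cluster point in $\langle g_j,h\rangle\leq\varphi^{\circ}(f_j,h)$ and applying (a); (f) for $t<0$ reduces via (b) to the identity $\partial(-\varphi)(f)=-\partial\varphi(f)$; and (g) is immediate by evaluating the limsup at $h=f$ using $\varphi(f+tg)\geq\varphi(f)$ for small $t$.

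The main obstacle is (h). Subadditivity of limsup gives $(\varphi+\psi)^{\circ}(f,g)\leq\varphi^{\circ}(f,g)+\psi^{\circ}(f,g)$. I will argue by contradiction: suppose $h\in\partial(\varphi+\psi)(f)\setminus\bigl(\partial\varphi(f)+\partial\psi(f)\bigr)$. The sum of two $\sigma(F',F)$-compact convex sets is again $\sigma(F',F)$-compact, hence weak*-closed, so Hahn--Banach separation in $(F',\sigma(F',F))$ produces a weak*-continuous linear functional, necessarily evaluation at some $g\in F$, that strictly separates $h$ from this sum. Applying (c) to each summand rewrites the right-hand supremum as $\varphi^{\circ}(f,g)+\psi^{\circ}(f,g)$, contradicting $\langle h,g\rangle\leq(\varphi+\psi)^{\circ}(f,g)$.
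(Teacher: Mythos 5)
Your proposal is correct and follows essentially the same route as the paper, which simply asserts that Clarke's Banach-space proofs carry over and cites the relevant propositions of Clarke's book. You merely make explicit the two Fr\'echet-specific ingredients (the domination of $\varphi^{\circ}(f,\cdot)$ by a multiple of a single continuous seminorm, and the Alaoglu--Bourbaki compactness of polars used for the max formula in (c) and the weak*-closedness of the sum in (h)), which is exactly the adaptation the paper's citation presupposes.
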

\begin{proof}
		The proofs of $ (a)-(h) $ are easy and similar to the Banach case cf.~\cite[Prop. 2.1.1(b)]{2};\cite[Prop. 2.1.1(c)]{2};~\cite[Prop. 2.1.2(b)]{2};~\cite[Prop. 2.1.5(a)]{2};\cite[Prop. 2.1.5(b)]{2};~\cite[Prop. 2.3.1]{2}; 
		\cite[Prop. 2.3.2]{2}; \cite[Prop. 2.3.3]{2}, respectively.	
\end{proof}
\begin{lemma}
The subdifferential $ \partial \varphi (f) $  is a nonempty, convex and weak$^*$ compact subset of $ F' $.  
\end{lemma}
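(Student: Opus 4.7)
The plan is to follow the standard Banach-space argument, with two substitutions: the operator norm is replaced by one of the seminorms $\sn \cdot \sn^i_F$ defining the topology of $F$, and Banach--Alaoglu is replaced by its locally convex generalization (the polar of a neighborhood of $0$ in $F$ is weak$^*$ compact in $F'$). First, convexity and weak$^*$ closedness come straight out of
\[
\partial\varphi(f) \;=\; \bigcap_{g\in F}\{f'\in F' : \langle f',g\rangle \leqq \varphi^{\circ}(f,g)\};
\]
for each fixed $g$ the evaluation $f' \mapsto \langle f',g\rangle$ is weak$^*$ continuous, so every half-space in the intersection is convex and weak$^*$ closed, and hence so is $\partial\varphi(f)$.

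Next I would establish equicontinuity. Since $\varphi \in \Lip(F,\rr)$, there exist an index $i$ and $K>0$ with $|\varphi(x)-\varphi(y)| \leqq K \sn x-y\sn^i_F$ on a neighborhood of $f$, which yields $\varphi^{\circ}(f,g) \leqq K \sn g\sn^i_F$ for every $g\in F$. For $f' \in \partial\varphi(f)$, applying this bound to both $g$ and $-g$ gives $|\langle f',g\rangle| \leqq K\sn g\sn^i_F$. Thus $\partial\varphi(f)$ lies in the polar of the seminorm-ball $\{g\in F : \sn g\sn^i_F < 1/K\}$, which is a neighborhood of $0$ in $F$; by the locally convex Alaoglu theorem this polar is weak$^*$ compact. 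Being a weak$^*$ closed subset of a weak$^*$ compact set, $\partial\varphi(f)$ is itself weak$^*$ compact.

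For nonemptiness I would invoke Hahn--Banach. By Lemma~\ref{compact}(a) the functional $g \mapsto \varphi^{\circ}(f,g)$ is sublinear (positively homogeneous and subadditive) and is dominated by $K\sn\cdot\sn^i_F$; Hahn--Banach then produces a linear $f'$ on $F$ with $\langle f',g\rangle \leqq \varphi^{\circ}(f,g)$ for all $g\in F$. The same two-sided seminorm estimate shows that $f'$ is continuous, so $f'\in F'$, and $f' \in \partial\varphi(f)$ by construction.

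The one delicate point is passing from the intrinsic local Lipschitzness of $\varphi$ in the metric~\eqref{metric} to a Lipschitz estimate by a single defining seminorm $\sn\cdot\sn^i_F$ on a possibly smaller neighborhood of $f$: the metric $d_F$ is not itself a seminorm, so a brief translation is required. This is a routine consequence of the form of $d_F$, but it is the only place where the Fr\'echet setting genuinely differs from the Banach one. With that technical step in hand, the three assertions reduce to the usual Hahn--Banach / Bourbaki--Alaoglu tandem.
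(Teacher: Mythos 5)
Your argument is correct and is essentially the paper's proof written out in full: the paper simply observes that Hahn--Banach and Alaoglu--Bourbaki are available for Fr\'echet spaces and defers to Clarke's Banach-space argument, which is exactly the convexity/weak$^*$-closedness, polar-compactness, and dominated-extension scheme you give.

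One caution about your closing paragraph: the single-seminorm estimate $|\varphi(x)-\varphi(y)|\leqq K\sn x-y\sn^i_F$ near $f$ is \emph{not} a consequence of Lipschitzness with respect to the metric $d_F$ in \eqref{metric}. For instance, in $F=\rr^{\nn}$ with $\sn x\sn^i=\max_{j\leqq i}|x_j|$, the function $\varphi(x)=d_F(x,0)$ is $1$-Lipschitz for $d_F$ but satisfies no estimate $|\varphi(x)-\varphi(0)|\leqq K\sn x\sn^i_F$ on any neighborhood of $0$ (take $x$ a small multiple of the $(i+1)$-st coordinate vector, so that $\sn x\sn^i_F=0$ while $\varphi(x)>0$); for such $\varphi$ even $\varphi^{\circ}(0,\cdot)$ can fail to be finite. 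So the seminorm-Lipschitz property cannot be derived from ``the form of $d_F$''; it has to be taken as the meaning of $\varphi\in\Lip(F,\rr)$, which is how the paper implicitly uses the term (e.g.\ in the proof of Theorem~\ref{mp}, where local Lipschitz bounds with respect to $\sn\cdot\sn^1_F$ are invoked). With that reading, your proof is complete and matches the intended one.
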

\begin{proof}
	 The Hahan-Banach theorem and Bourbaki-Alaoglu theorem are available for Fr\'{e}chet spaces, therefore,  it is enough to apply the arguments of Clarke~\cite[Prop. 2.1.2(a)]{2}.
\end{proof}
\begin{lemma}[Mean Value Theorem]\label{mean}
	Let $ f,g \in F $ and $ \varphi $ be a Lipschitz function on an open set containing the line segment $ [f,g] $. Then there exists
	$ \theta \in (0,1)  $ such that 
	\begin{equation}
	\varphi(g) - \varphi(f) \in \langle \partial \varphi (g + (\theta (g-f))), g-f\rangle.
	\end{equation}
\end{lemma}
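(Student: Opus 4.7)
The plan is to adapt Clarke's classical argument to the Fr\'{e}chet setting, using the calculus gathered in Lemma~\ref{compact}. First I set $x_t := f + t(g-f)$ for $t \in [0,1]$ and introduce the auxiliary real function
$$\zeta(t) := \varphi(x_t) + t\bigl[\varphi(f) - \varphi(g)\bigr].$$
Since $\varphi$ is Lipschitz on a neighborhood of the segment $[f,g]$ and $t \mapsto x_t$ is affine, $\zeta$ is Lipschitz on $[0,1]$ and satisfies $\zeta(0) = \zeta(1) = \varphi(f)$. Hence on the compact interval it attains its maximum or minimum at some interior point $\theta \in (0,1)$ (if $\zeta$ is constant any $\theta$ works). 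Lemma~\ref{compact}(g), applied to $\zeta$ or to $-\zeta$ and combined with part (f), then yields $0 \in \partial \zeta(\theta)$.

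Next I apply the sum rule Lemma~\ref{compact}(h) to the decomposition $\zeta = \psi + \ell$, where $\psi(t) := \varphi(x_t)$ and $\ell(t) := t[\varphi(f)-\varphi(g)]$ is affine with Clarke subdifferential $\{\varphi(f) - \varphi(g)\}$. This gives
$$\varphi(g) - \varphi(f) \in \partial \psi(\theta).$$

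The crux is then a chain-rule inclusion $\partial \psi(\theta) \subset \{\langle v, g-f\rangle : v \in \partial \varphi(x_\theta)\}$. Setting $k = x_h$ in the definition of $\psi^{\circ}$ and observing that $k \to x_\theta$ in $F$ whenever $h \to \theta$ in $\mathbb{R}$, one obtains directly
$$\psi^{\circ}(\theta;s) \leq \varphi^{\circ}(x_\theta;\,s(g-f)), \qquad s \in \mathbb{R}.$$
For $\alpha \in \partial \psi(\theta)$, Lemma~\ref{compact}(d) gives $\alpha s \leq \psi^{\circ}(\theta;s)$; testing at $s = \pm 1$ and combining with Lemma~\ref{compact}(c) shows that $\alpha$ lies between the minimum and the maximum of $v \mapsto \langle v, g-f\rangle$ over $\partial \varphi(x_\theta)$. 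Since $\partial \varphi(x_\theta)$ is convex and weak$^{\ast}$ compact (previous lemma) and $v \mapsto \langle v, g-f\rangle$ is weak$^{\ast}$ continuous, the image set is a closed real interval, so $\alpha = \langle v, g-f\rangle$ for some $v \in \partial \varphi(x_\theta)$. Combined with the previous display this delivers the claim.

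I expect the chain-rule step to be the main obstacle: one has to convert a one-sided inequality valid for every $s \in \mathbb{R}$ into a genuine membership $\varphi(g)-\varphi(f) \in \langle \partial \varphi(x_\theta), g-f\rangle$, which is precisely the place where the weak$^{\ast}$ compactness and convexity supplied by the preceding lemma are essential. The other delicate point is making sure the Fr\'{e}chet topology on $F$ is correctly handled when reducing $\psi^{\circ}$ to a directional derivative of $\varphi$, i.e.\ that the restriction $k=x_h$ really gives an upper bound by the unrestricted limsup defining $\varphi^{\circ}$.
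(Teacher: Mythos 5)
Your argument is correct and is essentially the paper's proof: the paper simply defers to Clarke's Theorem 2.3.7 (Lebourg's mean value theorem), and your auxiliary function $\zeta(t)=\varphi(x_t)+t[\varphi(f)-\varphi(g)]$, the interior extremum giving $0\in\partial\zeta(\theta)$, the sum rule, the restricted-limsup bound $\psi^{\circ}(\theta;s)\leqq\varphi^{\circ}(x_\theta;s(g-f))$, and the convexity/weak$^{*}$ compactness step are exactly that argument transplanted to the Fr\'echet setting, where all the ingredients (Lemma~\ref{compact}(c),(d),(f),(g),(h) and the nonemptiness/compactness of $\partial\varphi$) are already available. Note only that you (correctly, as in Clarke) obtain the point $f+\theta(g-f)$ on the segment $[f,g]$, whereas the statement as printed reads $g+\theta(g-f)$, which is evidently a typo.
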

\begin{proof}
The proof is very similar to that of~\cite[Theorem 2.3.7]{2}.
\end{proof}

\begin{lemma}[Chain rule]\label{chain}
	Let $ E $ be a Fr\'{e}chet space, $ \varphi : E\rightarrow F  $ a $ C^1_c $-map in a neighborhood of $ e \in E $, and $ \psi : F \rightarrow \rr $ a locally Lipschitz map.
	Then $ \tau = \psi \circ \varphi  $ is locally Lipschitz and $$ \partial \tau (e) \subseteqq  \partial \psi (\varphi (e)) \circ \dd \varphi (e).  $$ If  $ \psi $ is regular at $\varphi(e) $ then the equality holds.
\end{lemma}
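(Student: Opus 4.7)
The plan is to adapt Clarke's Banach-space proof of the chain rule to the Fr\'{e}chet setting (compare \cite[Sec.~2.3]{2}), paying close attention to the seminorm bookkeeping required by the Keller $ C^1_c $ notion. First I would verify that $ \tau $ is locally Lipschitz near $ e $: since $ \varphi $ is $ C^1_c $, a mean value argument along line segments produces, for every seminorm $ \sn \cdot \sn^i_F $ on $ F $, a neighborhood $ U $ of $ e $, a seminorm $ \sn \cdot \sn^j_E $, and a constant $ C_i $ such that $ \sn \varphi(x_1) - \varphi(x_2) \sn^i_F \leqq C_i \sn x_1 - x_2 \sn^j_E $ on $ U $. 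Choosing $ i $ so that $ \psi $ is $ L $-Lipschitz with respect to $ \sn \cdot \sn^i_F $ on a neighborhood of $ \varphi(e) $ yields the local Lipschitz property of $ \tau $.

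The core step is the inequality
\begin{equation*}
\tau^{\circ}(e; h) \leqq \psi^{\circ}(\varphi(e); \dd \varphi(e) h), \quad h \in E.
\end{equation*}
Select sequences $ x_n \to e $ and $ t_n \downarrow 0 $ realising the $ \limsup $ in $ \tau^{\circ}(e;h) $, and write
\begin{equation*}
\varphi(x_n + t_n h) = \varphi(x_n) + t_n \dd \varphi(x_n) h + r_n,
\end{equation*}
where $ r_n/t_n \to 0 $ in every seminorm of $ F $; this follows from the $ C^1_c $ mean value theorem applied to $ s \mapsto \varphi(x_n + s h) - s \dd \varphi(x_n) h $. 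The local Lipschitz constant $ L $ of $ \psi $ relative to $ \sn \cdot \sn^i_F $ absorbs the remainder via
\begin{equation*}
\bigl| \psi\bigl(\varphi(x_n) + t_n \dd \varphi(x_n) h + r_n\bigr) - \psi\bigl(\varphi(x_n) + t_n \dd \varphi(x_n) h\bigr) \bigr| \leqq L \sn r_n \sn^i_F = o(t_n).
\end{equation*}
Continuity of $ \dd \varphi(\cdot) h $ at $ e $ then replaces $ \dd \varphi(x_n) h $ by $ \dd \varphi(e) h $ at the cost of another $ o(1) $ term per unit $ t_n $, so each difference quotient is arbitrarily close to $ (\psi(y_n + t_n \dd \varphi(e) h) - \psi(y_n))/t_n $ with $ y_n = \varphi(x_n) \to \varphi(e) $, whose $ \limsup $ is by definition bounded by $ \psi^{\circ}(\varphi(e); \dd \varphi(e) h) $.

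Given $ \xi \in \partial \tau(e) $, consider the set
\begin{equation*}
S \coloneq \bigl\{ \eta \circ \dd \varphi(e) : \eta \in \partial \psi (\varphi(e)) \bigr\} \subseteqq E',
\end{equation*}
which is convex and weak$ ^* $ compact because $ \partial \psi(\varphi(e)) $ is so (by the previous lemma) and $ \eta \mapsto \eta \circ \dd \varphi(e) $ is weak$ ^* $ continuous. If $ \xi \notin S $, the Hahn-Banach separation theorem in the locally convex space $ (E',\sigma(E',E)) $ produces $ h \in E $ with
\begin{equation*}
\langle \xi, h\rangle > \sup_{\eta \in \partial \psi (\varphi(e))} \langle \eta, \dd \varphi(e) h\rangle = \psi^{\circ}(\varphi(e); \dd \varphi(e) h),
\end{equation*}
the equality coming from Lemma \ref{compact}(c). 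Combined with $ \langle \xi, h\rangle \leqq \tau^{\circ}(e;h) $ and the core step, this contradicts $ \xi \in \partial \tau(e) $, so $ \xi \in S $, proving the stated inclusion.

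When $ \psi $ is regular at $ \varphi(e) $, its one-sided derivative exists and equals $ \psi^{\circ}(\varphi(e); \cdot) $, and the Keller differentiability of $ \varphi $ yields $ \tau'(e; h) = \psi'(\varphi(e); \dd \varphi(e) h) $; hence $ \tau^{\circ}(e;h) \geqq \psi^{\circ}(\varphi(e); \dd \varphi(e) h) $, and the core step forces equality. The common value is the support function of both $ \partial \tau(e) $ and $ S $ by Lemma \ref{compact}(c), and since both sets are weak$ ^* $ compact and convex, they coincide. I expect the main obstacle to be the uniform remainder estimate $ r_n/t_n \to 0 $ in a seminorm compatible with the one controlling the Lipschitz constant of $ \psi $: this seminorm matching between $ E $ and $ F $ is the only place where the Fr\'{e}chet structure noticeably complicates the standard Banach argument.
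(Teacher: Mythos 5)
Your argument is correct and follows essentially the same route as the paper, which simply defers to Clarke's Banach-space proof of the chain rule (\cite[Theorem 2.3.10]{2}): you establish $\tau^{\circ}(e;h)\leqq\psi^{\circ}(\varphi(e);\dd\varphi(e)h)$ via the $C^1_c$ mean value estimate and then conclude by weak$^*$ compactness and Hahn--Banach separation, exactly the standard scheme the paper invokes. The extra seminorm bookkeeping you supply (local uniform bound on $\dd\varphi$ from joint continuity of $(x,h)\mapsto\dd\varphi(x)h$, and matching the seminorm index to the Lipschitz seminorm of $\psi$) is precisely the detail the paper leaves implicit, and it is handled correctly.
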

\begin{proof}
The proof is also quite analogous to the Banach case~\cite[Theorem 2.3.10]{2}.
\end{proof}
We recall that a family $\bb$ of bounded subsets of $F$ that covers $F$
is called a bornology on $F$ if it is directed upwards by inclusion and if for every $B \in \bb$ and $r \in \rr$ there is
a $C \in \bb$ such that $r \cdot B \subset C$. 

Let $E$ be a Fr\'{e}chet space, $\bb$ a bornology on $F$ and $L_{\bb}(F,E)$  the space of all linear continuous maps from $F$ to $E$. 
The $\bb$-topology on $L_{\bb}(F,E)$ is a Hausdorff locally convex topology defined by all seminorms obtained as follows:
\begin{equation} \label{iq}
  \parallel L \parallel_{\bb} ^n \coloneq \sup \{ \parallel L (f) \parallel_E^n :  f \in B \, \&\, B \in \bb\}.
\end{equation} 
Suppose that $\bb$ consists of all compact sets, 
then the $\bb$-topology on the space  $L _{\bb}(F,\rr) = F_{\bb}'$ of all continuous linear functional on $F$, the dual
of $F$, is the topology of compact convergence. Let $U \subset F$ be open and  $\varphi : F \rightarrow E $ a Keller $C^1_{c}$-map at $x \in U$.
The derivative of $\varphi$ at $x$, $\dd \varphi(x)$, is an element of $F_{\bb}'$. We denote by $ < . , .>_{\bb}  $ the duality pairing between
$F$ and $F'_{\bb}$

\begin{lemma}
Let $\varphi : U \subset F \rightarrow \rr$ be Lipschitz in open neighbourhood $ U$ of $x$. If $\varphi$ is a $C^1_c$-map
at $x$, then $\dd \varphi (x) \in \partial \varphi (x)$.
\end{lemma}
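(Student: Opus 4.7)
The plan is to verify the defining inequality for membership in $\partial\varphi(x)$ by specializing the limit superior in the definition of $\varphi^\circ$ to a convenient one-parameter subfamily. By the definition of the Clarke subdifferential given in the section,
$$D\varphi(x)\in \partial\varphi(x)\;\iff\;\langle D\varphi(x),g\rangle \leqq \varphi^\circ(x,g)\ \text{for every } g\in F,$$
so this is the only inequality that has to be proved.

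First I would unpack what the Keller $C^1_c$-hypothesis gives us at the point $x$. Namely, $D\varphi(x)$ is a continuous linear map $F\to \rr$, hence an element of $F'$, and for every direction $g\in F$ the Gâteaux limit
$$D\varphi(x)\,g \;=\; \lim_{t\downarrow 0}\frac{\varphi(x+tg)-\varphi(x)}{t}$$
exists (one-sided differentiability is a consequence of full Gâteaux differentiability, which is built into the $C^1_c$-definition).

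The main step is then the elementary observation that the limit superior
$$\varphi^\circ(x,g)=\limsup_{h\to x,\,t\downarrow 0}\frac{\varphi(h+tg)-\varphi(h)}{t}$$
is bounded below by the limit obtained by restricting to the one-parameter subfamily $h\equiv x$, $t\downarrow 0$. This subfamily is admissible, since $d_F(x,x)=0$ lies in every $\varepsilon$-neighbourhood of $x$ with respect to the metric~\eqref{metric}. Hence
$$\varphi^\circ(x,g)\;\geq\;\lim_{t\downarrow 0}\frac{\varphi(x+tg)-\varphi(x)}{t}\;=\;\langle D\varphi(x),g\rangle,$$
and since $g\in F$ was arbitrary the required inequality is established.

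I do not anticipate any real obstacle; the argument is essentially a matching of definitions. The only subtle point worth flagging is that we must appeal to the $C^1_c$-structure (not merely the local Lipschitz assumption) in order to know that the difference quotient $t^{-1}(\varphi(x+tg)-\varphi(x))$ actually converges to a fixed continuous linear functional of $g$; the Lipschitz hypothesis alone would only give a finite $\limsup$, which is not enough to produce a distinguished element of $F'$.
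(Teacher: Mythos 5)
Your proposal is correct and is essentially the paper's argument: both verify the defining inequality $\langle \dd\varphi(x),g\rangle \leqq \varphi^{\circ}(x,g)$ for all $g$ and then invoke the characterization of $\partial\varphi(x)$ (Lemma~\ref{compact}(d)), the key point being that the Gâteaux directional derivative supplied by the $C^1_c$-hypothesis is dominated by the $\limsup$ defining $\varphi^{\circ}$ when one restricts to $h=x$, $t\downarrow 0$. If anything, your write-up is cleaner, since it makes this restriction argument explicit and avoids the paper's unnecessary detour through comparing the pairing $\langle\cdot,\cdot\rangle_{\bb}$ with $\langle\cdot,\cdot\rangle$.
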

\begin{proof}
By definition for all $h \in F$ we have  $$\dd \varphi(x)(h) = < \dd \varphi(x),h>_{\bb}.$$
By our definition of differentiability we get $ \dd \varphi(x)(h) \leqq \varphi^{\circ} (x,h)$, but $$ < \dd \varphi(x),h>_{\bb} \, \geqq \, < \dd \varphi(x),h> $$
therefore  $ < \dd \varphi(x),h>  \leqq \varphi^{\circ} (x,h)$ thereby by Lemma~\ref{compact}(d)  we obtain $\dd \varphi (x) \in \partial \varphi (x)$.
\end{proof}

\section{Chang Palais-Smale condition}
A point $ f \in F $ is called a critical point of $ \varphi $ if $ 0 \in \partial \varphi (f) $, 
that is $ \varphi^{\circ} (f;g) \geqq 0, \forall g \in F$. The value of a critical point is called
a critical value. 

We define for each $ \phi \in \Phi $ the function $ \lambda_{\varphi,\phi} $ on $F$ as follows
\begin{equation}
\lambda_{\varphi,\phi}(f) = 
\min_{y \in \partial \varphi(f)}  \rho_{\phi}(y).	
\end{equation}
The seminorms $\rho_{\phi} (\cdot)$ are bounded below and weak$^*$ lower semi-continuous because they arise as the pointwise supremum of the continuous absolute value function. Also, $ \partial \varphi (x) $ is weak$^*$-compact therefore the minimum is obtained.
\begin{lemma}\label{pis}
The set-valued mapping $ f \mapsto \partial \varphi{(f)} $ is locally bounded and weak$^{*}$ upper semi-continuous.
\end{lemma}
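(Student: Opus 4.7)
The plan is to prove local boundedness first and then to deduce weak$^*$ upper semicontinuity from it together with Lemma~\ref{compact}(e).

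For local boundedness, fix $f_0 \in F$. Since $\varphi \in \Lip(F,\rr)$, there exist a neighborhood $V$ of $f_0$, an index $n$, and a constant $K>0$ such that $|\varphi(x) - \varphi(y)| \leqq K \sn x-y \sn^n_F$ for all $x,y \in V$. Shrinking $V$ if necessary, the definition of $\varphi^{\circ}$ immediately yields $\varphi^{\circ}(f,g) \leqq K \sn g \sn^n_F$ for every $f \in V$ and every $g \in F$. For $\xi \in \partial \varphi(f)$, Lemma~\ref{compact}(d) gives $\langle \xi,g\rangle \leqq \varphi^{\circ}(f,g) \leqq K \sn g \sn^n_F$; replacing $g$ by $-g$ then yields $|\langle \xi,g\rangle| \leqq K \sn g \sn^n_F$. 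Hence $\bigcup_{f \in V}\partial \varphi(f)$ is equicontinuous on $F$, in particular weak$^*$ bounded, and by the Bourbaki-Alaoglu theorem is contained in a weak$^*$ compact set $Q \subset F'$.

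For the upper semicontinuity statement I argue by contradiction. Suppose some weak$^*$ open set $W \supset \partial \varphi(f_0)$ has the property that $\{f : \partial \varphi(f) \subset W\}$ is not a neighborhood of $f_0$. Since $F$ is metrizable, one can extract a sequence $f_j \to f_0$ together with elements $\xi_j \in \partial \varphi(f_j) \setminus W$. By the first step, $(\xi_j)$ lies eventually in the weak$^*$ compact set $Q$, so it admits a weak$^*$ cluster point $\xi \in Q$. Because $F' \setminus W$ is weak$^*$ closed, $\xi \notin W$. Lemma~\ref{compact}(e), however, forces $\xi \in \partial \varphi(f_0) \subset W$, giving the desired contradiction.

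The delicate point is the last invocation of Lemma~\ref{compact}(e): in the Banach case one passes through a weak$^*$ convergent subsequence of $(\xi_j)$, which is available because bounded subsets of the dual of a separable Banach space are weak$^*$ sequentially compact. For a general Fr\'{e}chet space, the weak$^*$ topology on the equicontinuous set $Q$ need not be metrizable unless $F$ is separable, and so either a separability hypothesis or a net-based extension of part (e) has to be invoked. Apart from this issue, the argument is a verbatim transcription of the Banach proof.
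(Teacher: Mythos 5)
Your argument is correct and is essentially the proof the paper intends: the paper's proof of this lemma is only a pointer to the Banach-space case (Giles, Theorem 1.1.2), and your write-up is exactly that argument transposed to the seminorm/equicontinuity setting, with local boundedness from the local Lipschitz constant and upper semicontinuity by contradiction via Bourbaki--Alaoglu and Lemma~\ref{compact}(e). The ``delicate point'' you flag is not actually an obstruction: you never need a weak$^{*}$ convergent subsequence, only a cluster point, and every sequence in the weak$^{*}$ compact set $Q$ has one without any separability or metrizability assumption (any point of $\bigcap_{n}\overline{\{\xi_j : j\geqq n\}}$, nonempty by compactness, works), while Lemma~\ref{compact}(e) is already stated for cluster points of sequences, so no net-based extension is required.
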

\begin{proof}
The proof is a slight modification of the Banach case, cf.~\cite[Theorem 1.1.2]{giles} 
\end{proof}
\begin{lemma} \label{smi}
For each $ \phi \in \Phi $ the function $ \lambda_{\varphi,\phi}(f) $ is sequentially lower semi-continuous
\end{lemma}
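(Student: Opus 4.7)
The plan is to prove sequential lower semi-continuity directly from the definition: take any sequence $f_n \to f$ in $F$ and show that $\lambda_{\varphi,\phi}(f) \leqq \liminf_n \lambda_{\varphi,\phi}(f_n)$. Set $\alpha \coloneq \liminf_n \lambda_{\varphi,\phi}(f_n)$; after passing to a subsequence I may assume $\lambda_{\varphi,\phi}(f_n) \to \alpha$. Since $\partial\varphi(f_n)$ is weak$^*$-compact and $\rho_\phi$ is weak$^*$ lower semi-continuous, for every $n$ I can pick $y_n \in \partial\varphi(f_n)$ realizing the minimum, so $\rho_\phi(y_n) = \lambda_{\varphi,\phi}(f_n)$.

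The next step is to extract a weak$^*$ cluster point of $(y_n)$ in $F'$. Because $f_n \to f$, the sequence $(f_n)$ lies in a neighborhood of $f$ on which, by Lemma~\ref{pis}, the set-valued map $\partial\varphi$ is locally bounded; in particular the family $(y_n)$ is equicontinuous. Applying the Bourbaki-Alaoglu theorem to this equicontinuous family (which is invoked in the proof of the previous lemma and is available in the Fréchet setting) produces a weak$^*$ cluster point $y \in F'$ and a subsequence $(y_{n_k})$ converging to it.

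Once $y$ is in hand, Lemma~\ref{compact}(e) gives $y \in \partial\varphi(f)$, since $f_{n_k} \to f$ and $y_{n_k} \in \partial\varphi(f_{n_k})$ with $y_{n_k} \to y$ weak$^*$. The weak$^*$ lower semi-continuity of $\rho_\phi$ that was noted just before the statement now yields
\begin{equation*}
\rho_\phi(y) \leqq \liminf_{k} \rho_\phi(y_{n_k}) = \liminf_{k} \lambda_{\varphi,\phi}(f_{n_k}) = \alpha.
\end{equation*}
Combining with $\lambda_{\varphi,\phi}(f) = \min_{z \in \partial\varphi(f)} \rho_\phi(z) \leqq \rho_\phi(y)$ gives $\lambda_{\varphi,\phi}(f) \leqq \alpha$, which is exactly sequential lower semi-continuity.

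The only real delicacy is Step~2, namely producing the weak$^*$ cluster point: in the Banach case one simply invokes bounded-implies-weak$^*$-relatively-compact, but for Fréchet duals one must pass through equicontinuity. That is why the local boundedness statement of Lemma~\ref{pis} was formulated first, and invoking it via the Bourbaki-Alaoglu theorem (valid in any locally convex space for equicontinuous sets) is the crux. Everything else is a direct transcription of the Banach argument, using parts (d) and (e) of Lemma~\ref{compact} in place of their classical counterparts.
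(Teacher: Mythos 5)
Your argument is essentially the paper's proof of Lemma~\ref{smi}, run directly rather than by contradiction: choose minimizers $y_n\in\partial\varphi(f_n)$, use the local boundedness from Lemma~\ref{pis} to obtain a weak$^*$ cluster point, place it in $\partial\varphi(f)$ via Lemma~\ref{compact}(e), and conclude with the weak$^*$ lower semi-continuity of $\rho_\phi$. The only caveat is that weak$^*$ compactness of the (equicontinuous) bounded set yields a cluster point rather than a convergent subsequence, since the weak$^*$ topology need not be metrizable; but since $\rho_\phi$ is a supremum over a finite set, the cluster point already gives $\rho_\phi(y)\leqq\liminf_k\rho_\phi(y_{n_k})$, so the proof stands, exactly as in the paper.
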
	
\begin{proof}
	If $ \lambda_{\varphi,\phi}(f) $ is not sequentially lower-continuous there exist a sequence $ f_n \rightarrow f_0$ such that $\lim_{n \rightarrow \infty} \lambda_{\varphi,\phi}(f_n) < \lambda_{\varphi,\phi}(f_0) $. Let a sequence $y_n \in  \partial \varphi(f_n) $ be such that $ \rho_{\phi}(y_n) =\lambda_{\varphi,\phi}(f_n) $. By Lemma~\ref{pis}
	there exist a weak$^{*}$ open set $U$ in $F'$ such that $ \partial\varphi(f_0) \subseteq U $ and
	a neighborhood $ V $ of $ f_0 $ on which the mapping is bounded such that there exists
	a subsequence $ (f_{n_i}) $ of $ (f_n) $ in $ V $ so $ y_{n_i} \in \partial \varphi(f_{n_i}) $ and $ y_{n_i} \in U $.
	Since $ \{y_{n_i} \}$ is bounded it has a weak cluster $ y_0 $ and hence by Lemma~\ref{compact}(e) we have $ y_0 \in \partial \varphi(f_0) $ but
	$$
	\lambda_{\varphi,\phi}(f_0) \leqq \rho_{\phi}(y_0) \leqq
	 \liminf_{{n_i} \rightarrow \infty} \rho_{\phi} (y_{n_i}) 
	$$
	which is a contradiction. 
\end{proof}

\begin{defn}[Chang PS-condition]
	Let $ \varphi \in \Lip(F,\rr) $. We say that $ \varphi $ satisfies
	the Palais-Smale condition in the Chang's sense, Chang PS condition for short, if any sequence $ (f_n) $ in $ F $ such that $ \varphi(f_n) $ is bounded and for all
	$ \phi \in \Phi $
	\begin{equation}\label{1}
	\lim_{n \rightarrow \infty} \lambda_{\varphi,\phi}(f_n) = 0,
	\end{equation}
	possesses a convergent subsequence. Also, if any sequence $ (f_n) \subset F $ such that $ \varphi(f_n) \rightarrow c \in \rr$ and satisfies~\eqref{1} possesses a convergent subsequence we say that $ \varphi $ satisfies the Chang PS condition at level $ c $. 
\end{defn}
Suppose that $ \varphi \in \Lip(F,\rr) $  satisfies the Chang PS-condition. Let $ (f_n) $ be any sequence in $ F $ that converges to $ f_0 $ and satisfies~\eqref{1}. Since by Lemma~\ref{smi} 
the functions $ \lambda_{\varphi,\phi}(f_n)$ are sequentially lower semi-continuous
it follows that  $ \forall \phi \in \Phi $
\begin{equation*}
\lim_{n \rightarrow \infty} \lambda_{\varphi,\phi}(f_n) = \liminf_{n \rightarrow \infty} \lambda_{\varphi,\phi}(f_n) \geqq  \lambda_{\varphi,\phi}(f_0). 
\end{equation*}
Whence $ \lambda_{\varphi,\phi}(f_0)=0  $, that is the $zero$ function in $ F' $ belongs to $\partial \varphi(f_0) $, hence $ f_0 $ is a critical point. 

Now we prove that a functional $ \varphi \in \Lip(F,\rr) $ that satisfies the Chang PS condition  at all levels is coercive. 
The idea of proof is inspired by the work of Brezis and Nirenberg~\cite{6}.  

A functional $ \varphi : F \rightarrow \rr $ is said to be coercive if $ \varphi (f)  \rightarrow + \infty$ as $ \sn f \sn _F^1 \rightarrow \infty $.

We will need the following version of Ekeland's variational principle. 
\begin{theorem}\label{ekf}\cite{ek2}
	Let $(X, \sigma)$ be a complete metric space.
	Let a functional $f : X \rightarrow (-\infty, \infty]$ be semi-continuous, bounded
	from below and not identical to $+\infty$. 
	Then, for any $ \epsilon > 0 $ and every point $ x_0 \in X $ there exists $u \in X$ such that
	\begin{enumerate}
		\item $f(u) \leqq  f(x_0) - \varepsilon \sigma (u,x_0)$
		\item $f(u) \leqq f(x) + {\epsilon} \sigma (x,u), \quad \forall x \in X$.
	\end{enumerate}
\end{theorem}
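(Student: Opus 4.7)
The plan is to carry out the classical Bishop-Phelps-Br\o ndsted partial-order argument, which is the standard route to Ekeland's principle and uses only completeness of $(X,\sigma)$ together with lower semi-continuity and boundedness below of $f$.

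First I would introduce a preorder on $X$ by declaring $y \preceq x$ iff $f(y)+\epsilon\sigma(x,y) \leq f(x)$. Reflexivity is immediate, transitivity follows from the triangle inequality for $\sigma$, and antisymmetry from summing the two relevant inequalities. The key structural observation is that each ``lower set'' $S(x):=\{y \in X : y \preceq x\}$ is closed in $X$: it is the set where the lower semi-continuous function $y \mapsto f(y)+\epsilon\sigma(x,y)$ does not exceed the constant $f(x)$. In particular $S(x)$ is a complete metric space.

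Next I would build a nested sequence of such lower sets. Start with $S_0 := S(x_0)$, which contains $x_0$. Recursively, pick $x_{n+1} \in S_n$ with $f(x_{n+1}) \leq \inf_{S_n} f + 1/(n+1)$, which is possible because $f$ is bounded below, and set $S_{n+1} := S(x_{n+1}) \subseteq S_n$. The crucial diameter estimate goes as follows: any $y \in S_{n+1}$ lies in $S_n$ by transitivity, hence $f(y) \geq \inf_{S_n} f \geq f(x_{n+1}) - 1/(n+1)$; combined with $\epsilon\sigma(x_{n+1},y) \leq f(x_{n+1}) - f(y)$, this yields $\sigma(x_{n+1},y) \leq 1/(\epsilon(n+1))$, so $\mathrm{diam}(S_{n+1}) \to 0$.

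Since $X$ is complete and the $S_n$ form a nested family of nonempty closed sets with diameters shrinking to zero, Cantor's intersection theorem gives $\bigcap_n S_n = \{u\}$ for a unique point $u$. Condition (1) is then simply the statement $u \in S_0$. For (2), suppose by contradiction that some $x \in X$ satisfies $f(x) + \epsilon \sigma(x,u) < f(u)$; this means $x \prec u$ strictly, so $x \neq u$, and by transitivity $x \preceq x_n$ for every $n$, forcing $x \in \bigcap_n S_n = \{u\}$, a contradiction. The main point to be careful about is using lower semi-continuity rather than continuity to verify that each $S(x)$ is closed; once that is in hand, the rest is bookkeeping with the preorder, and no use is made of any linear or Fr\'echet structure on $X$.
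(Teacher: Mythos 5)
The paper offers no proof of this statement: it is quoted, with citation, from Ekeland's article \cite{ek2}, so the only comparison available is with the classical argument in the literature. Your proof is correct and is essentially that classical argument --- the relation $y\preceq x$ iff $f(y)+\epsilon\sigma(x,y)\leqq f(x)$, closedness of the lower sets via lower semi-continuity, nested sets $S_n$ with diameters tending to zero, Cantor's intersection theorem, with $u\in S(x_0)$ giving (1) and your maximality argument giving (2). Two cosmetic caveats only: antisymmetry of $\preceq$ can fail at points where $f=+\infty$, but you never actually use it; and the diameter estimate needs $f(x_{n+1})<\infty$, which does hold because $\inf_{S_0}f<\infty$ (either $f(x_0)<\infty$, or $S_0=X$ and $f\not\equiv+\infty$) and $f\leqq f(x_n)<\infty$ on each subsequent $S_n$.
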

\begin{theorem}\label{c1}
	Let $ \varphi \in \Lip(F,\rr) $ and let \[\alpha \coloneq \liminf_{\sn f \sn^1 \rightarrow \infty } \varphi(f)\]  be finite. Then there exists a sequence $ (f_n) \subset F $ such that $ \sn f_n \sn^i \rightarrow \infty (\forall i \in \nn) $, $ \varphi (f_n) \rightarrow \alpha,$ and $ \lambda_{\varphi,\phi}(f_n)\rightarrow 0 $ 
	for all $ \phi \in \Phi $.
\end{theorem}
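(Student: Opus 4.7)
The plan is to adapt the Brezis-Nirenberg argument~\cite{6} via Ekeland's variational principle on a shrinking sequence of ``far-out'' sublevel sets. Fix a positive sequence $\epsilon_n \downarrow 0$. The finiteness of $\alpha$ lets us pick $R_n \uparrow \infty$ such that $\varphi(f) \geqq \alpha - \epsilon_n^2$ whenever $\sn f \sn^1 \geqq R_n$, together with $g_n \in F$ satisfying $\sn g_n \sn^1 \geqq R_n + 1$ and $\varphi(g_n) \leqq \alpha + \epsilon_n^2$. The set $A_n := \{f \in F : \sn f \sn^1 \geqq R_n\}$ is closed in $(F, d_F)$, hence complete, and $\varphi|_{A_n}$ is continuous and bounded below by $\alpha - \epsilon_n^2$. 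Applying Theorem~\ref{ekf} to $\varphi|_{A_n}$ with $\epsilon = \epsilon_n$ and starting point $g_n$ produces $f_n \in A_n$ satisfying
\[\varphi(f_n) \leqq \varphi(g_n) - \epsilon_n d_F(f_n, g_n) \quad \text{and} \quad \varphi(f_n) \leqq \varphi(f) + \epsilon_n d_F(f, f_n) \quad \forall f \in A_n.\]

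The first two desired properties then follow at once. Rearranging the first inequality and using $\varphi(f_n) \geqq \alpha - \epsilon_n^2$ gives $d_F(f_n, g_n) \leqq 2\epsilon_n \to 0$. Since $d_F$-convergence forces convergence in every seminorm $\sn \cdot \sn^i$, one has $\sn f_n - g_n \sn^i \to 0$ for each $i$; combined with $\sn g_n \sn^1 \to \infty$ and the monotonicity of the seminorm sequence, $\sn f_n \sn^i \to \infty$ for every $i \in \nn$. A sandwich also gives $\varphi(f_n) \to \alpha$.

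It remains to verify the Chang PS condition $\lambda_{\varphi,\phi}(f_n) \to 0$ for every $\phi \in \Phi$. For large $n$ one has $\sn f_n \sn^1 > R_n$ strictly, so $f_n$ lies in the interior of $A_n$, and the second Ekeland inequality identifies $f_n$ as a local minimum of the perturbed functional $\psi_n(f) := \varphi(f) + \epsilon_n d_F(f, f_n)$. Lemma~\ref{compact}(g) and (h) then give
\[ 0 \in \partial \psi_n(f_n) \subseteqq \partial \varphi(f_n) + \epsilon_n\, \partial d_F(\cdot, f_n)(f_n), \]
so we may pick $y_n \in \partial \varphi(f_n)$ with $y_n = -\epsilon_n z_n$ for some $z_n \in \partial d_F(\cdot, f_n)(f_n)$. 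The plan is to bound $\rho_\phi(z_n)$ by a constant $C_\phi$ uniformly in $n$, whence $\rho_\phi(y_n) \leqq \epsilon_n C_\phi \to 0$, yielding the claim. By Lemma~\ref{compact}(d) this reduces to estimating $d_F^{\circ}(f_n;h)$ for $h \in \phi$; translation-invariance and the triangle inequality further reduce the problem to controlling the incremental quotient of $d_F(\cdot, 0)$ at the origin, which one attacks by splitting the defining series at a level $N$ and estimating the head (by $\sn h \sn^N$ times a constant) and tail (by the convergent remainder of the geometric weights) separately.

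The main obstacle will be exactly this last estimate. Unlike the Banach case, where the perturbation $\epsilon \sn \cdot - f_n \sn$ has subdifferential equal to the closed dual unit ball at $f_n$, the Fr\'{e}chet metric $d_F$ is a sum of bounded, non-smooth pieces whose directional derivatives need not be controlled by any single seminorm; making the truncation uniform in the finite set $\phi$ and independent of $n$ is where the real care will lie, and it is precisely what allows the weak$^*$-control on $\partial d_F(\cdot, f_n)(f_n)$ needed to close the argument.
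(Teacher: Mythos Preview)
Your overall strategy coincides with the paper's: restrict to the closed far-out set, apply Ekeland's principle (Theorem~\ref{ekf}) from a well-chosen starting point, verify that the almost-minimizer is interior, and then read off subgradient information from the perturbed functional $\varphi+\varepsilon\, d_F(\cdot,f_n)$. Your interiority argument is in fact tidier than the paper's: you pass from $d_F(f_n,g_n)\leqq 2\epsilon_n$ to $\sn f_n-g_n\sn^1\to 0$ and combine this with $\sn g_n\sn^1\geqq R_n+1$, whereas the paper's chain $d_F(g,0)\geqq d_F(z_0,0)-d_F(g,z_0)\geqq 2r_2-2\varepsilon$ tacitly conflates the (bounded) metric $d_F$ with the unbounded seminorm $\sn\cdot\sn^1$.

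The genuine gap is precisely the step you flag. The paper simply asserts $0\in\partial\varphi(g)+\varepsilon B'_F$ and deduces $\lambda_{\varphi,\phi}(g)\leqq\varepsilon$, never explaining what the ``closed unit ball'' $B'_F$ in the non-normable dual is, nor why $\partial\big(d_F(\cdot,g)\big)(g)$ sits inside it. Your proposed head--tail splitting does not close this: at the base point each summand of the incremental quotient behaves like
\[
\frac{1}{t}\left[\frac{\sn u+th\sn^i}{1+\sn u+th\sn^i}-\frac{\sn u\sn^i}{1+\sn u\sn^i}\right]\;\longrightarrow\;\sn h\sn^i\qquad(u\to 0,\ t\downarrow 0),
\]
since $x\mapsto x/(1+x)$ has slope $1$ at $0$. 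Hence the tail contributes $\sum_{i>N} w_i\,\sn h\sn^i$ to $d_F^{\circ}(f_n;h)$, \emph{not} the geometric remainder $\sum_{i>N} w_i$. For a general $h\in F$ this series diverges (take $F=\rr^{\nn}$ with coordinate seminorms and $h$ growing like $w_i^{-1}$), so $d_F(\cdot,f_n)$ fails to be locally Lipschitz in the sense needed for Lemma~\ref{compact}(h), and $\partial d_F(\cdot,f_n)(f_n)$ is not weak$^*$-bounded. You have correctly located the crux; the paper does not justify it, and your sketched estimate does not work as stated.
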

\begin{proof}
	Define 
\begin{equation}\label{mr}
m(r) \coloneq \inf_{\sn f \sn^1 \geqq r}\varphi(f).
\end{equation}
	The function $ m(r) $ is a non-decreasing and 
	\begin{equation}\label{ee}
	\lim_{r \rightarrow \infty} m(r) = \alpha .
	\end{equation} 
	By~\eqref{ee} for each $ \varepsilon >0 $
	there exists $ r_1   $ such that for all $ r\geqq r_1 $
	\begin{equation} \label{27}
	 \alpha -\varepsilon^2 \leqq m(r).
	\end{equation}
	For a fixed $ \varepsilon >0 $ choose a number 
	\begin{equation}\label{46}
		r_2 \geqq \max \{ r_1,2\varepsilon\}.
	\end{equation}
	 By our assumption we can fix some $ z_0 $ with $  \sn z_0 \sn^1 \geqq 2r_2$ such that
\begin{equation}\label{42}
	\varphi(z_0) <  \alpha + \varepsilon^2
\end{equation} 
	Let $ \mathbf{F} = \{ f \in F :  \sn f \sn ^1 \geqq r_2 \}$. It is closed in $ F $, so it is a complete metric space by the induced  metric~\eqref{metric}. Moreover, $ \varphi $ is lower semi-continuous on $ F $ and so on $ \mathbf{F} $. Also, by
	\eqref{mr},\eqref{27} and \eqref{46}
	$$ \varphi(u)  \geqq m (\sn u \sn_F^1) \geqq \alpha - \varepsilon^2, \quad \forall u \in F \, \mathrm{with} \, \sn u \sn_F^1 \geqq r_2.$$
	So $ \varphi $ is lower bounded, and therefore, all assumptions of Theorem~\ref{ekf} are fulfilled for  
	$\mathbf{F} $. Thus, there is $ g \in \mathbf{F} $ such that 
	\begin{equation} \label{lklk}
	\varphi(g) \leqq  \varphi(x) + \varepsilon d_F(g,x), \quad \forall x \in \mathbf{F}
	\end{equation}
	\begin{equation}\label{24}
	\varphi (g) \leqq \varphi(z_0) - \varepsilon d_F(g,z_0)
	\end{equation}
	 It follows that \eqref{mr},\eqref{27}, \eqref{46},\eqref{24} and \eqref{42} 
	\begin{equation}
		\alpha - \varepsilon^2 \leqq m(r_2) \leqq \varphi(g) \leqq \varphi(z_0)-\varepsilon d_F(g,z_0) \leqq \alpha + \varepsilon^2 - \varepsilon d_F(g,z_0)  
	\end{equation}
	Hence 
	\begin{equation}
		d_F(g,z_0) \leqq 2 \varepsilon. 
	\end{equation}
	Thereby, by \eqref{46}
	\begin{equation}
	 d_F(g,0)   \geq d_F (z_0,0)  - d_F (g,z_0)  \geqq 2r_2 - 2 \varepsilon \geqq r_2.
	\end{equation}
	Whence  
	$ g $ is an interior point of $ \mathbf{F} $. Define on $ \mathbf{F} $ the function $$ \widetilde{\varphi}(h) \coloneq  d_F(g, h)  + \varphi(h).$$
	The function $\widetilde{\varphi}(h)$ attains its minimum in $ g \in \mathrm{Int} \,\mathbf{F}$ by virtue of~\eqref{lklk}. Therefore
	$$ 0 \in \partial \widetilde{\varphi}(g) \subseteq \partial \varphi (g) + \varepsilon B'_F$$ 
	 where $ B'_F $ is the closed unit ball in $ F' $. Thus,
	 $$ \lambda_{\varphi,\phi}(g) = \min \{ \rho_{\phi}(h)  \mid h \in \partial \varphi (h)\} \leqq \varepsilon. $$
	 Letting
	$ \varepsilon = \varepsilon_n \downarrow 0 $ completes the proof.
\end{proof}
\begin{cor}
	If $ \varphi \in \Lip (F,\rr) $ is bounded below and satisfies the Chang PS condition at $ c $ for all $ c \in \rr $, then it is coercive.
\end{cor}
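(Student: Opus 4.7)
The plan is a short proof by contradiction that combines Theorem~\ref{c1} with the Chang PS condition. Suppose $\varphi$ fails to be coercive. Then there exist a constant $M \in \rr$ and a sequence $(g_n) \subset F$ with $\sn g_n \sn^1_F \to \infty$ and $\varphi(g_n) \leqq M$. Since $\varphi$ is bounded below, this forces the quantity
\[
\alpha := \liminf_{\sn f \sn^1_F \to \infty} \varphi(f)
\]
to be finite (it lies in the interval $[\inf \varphi, M]$). This is exactly the hypothesis required to invoke Theorem~\ref{c1}.

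Applying Theorem~\ref{c1}, I obtain a sequence $(f_n) \subset F$ such that $\sn f_n \sn^i_F \to \infty$ for every $i \in \nn$, $\varphi(f_n) \to \alpha$, and $\lambda_{\varphi,\phi}(f_n) \to 0$ for every $\phi \in \Phi$. In particular $\varphi(f_n)$ converges (hence is bounded) and condition~\eqref{1} holds, so $(f_n)$ is a Chang PS sequence at the level $\alpha$.

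By assumption $\varphi$ satisfies the Chang PS condition at $c = \alpha$, so $(f_n)$ admits a convergent subsequence $f_{n_k} \to f_0$ in $F$. Convergence in the Fr\'{e}chet topology of $F$ implies convergence with respect to every seminorm, in particular $\sn f_{n_k} \sn^1_F \to \sn f_0 \sn^1_F < \infty$. This contradicts $\sn f_n \sn^1_F \to \infty$ (which passes to any subsequence). The contradiction shows that $\varphi$ must be coercive.

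The argument is essentially a one-line deduction once Theorem~\ref{c1} and the Chang PS condition are in place; there is no genuine obstacle. The only minor point to be careful about is the initial reduction: one must verify that the failure of coercivity yields a finite $\alpha$, which uses both lower boundedness of $\varphi$ (for a lower bound on $\alpha$) and the non-coercivity assumption (for an upper bound on $\alpha$). Everything else is a direct chaining of the already-established results.
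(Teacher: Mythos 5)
Your proof is correct and follows essentially the same route as the paper: assume non-coercivity, note that boundedness below plus non-coercivity makes $\alpha = \liminf_{\sn f \sn^1_F \to \infty}\varphi(f)$ finite, invoke Theorem~\ref{c1} to produce a Chang PS sequence at level $\alpha$ with seminorms tending to infinity, and derive a contradiction from the convergent subsequence supplied by the Chang PS condition. Your write-up is in fact slightly more careful than the paper's, since it makes explicit both why $\alpha$ is finite and why a convergent subsequence is incompatible with $\sn f_n \sn^1_F \to \infty$.
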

\begin{proof}
	If it is not coercive then $ \alpha = \liminf_{\sn f \sn^1 \rightarrow \infty} \varphi (f)  $ is finite.
	Then by Theorem~\ref{c1} there exists a sequence $ (f_n) \subset F $ such that $ \sn f_n \sn^i \rightarrow \infty (\forall i \in \nn),\, \varphi(f_n) \rightarrow \alpha  $ and $ \lambda_{\varphi,\phi}(f_n)\rightarrow 0 $ 
	for all $ \phi \in \Phi $. Then the Chang PS-condition at $ \alpha $ yields that $ (f_n) $ has a convergent subsequent which is a contradiction. 
\end{proof}
\section{The mountain pass theorem }
Following the lines of the mountain pass theorem for Banach spaces due to Shuzhong~\cite{shi} we prove a version of the mountain pass theorem for locally Lipschitz functions between Fr\'{e}chet spaces. This is the most
suitable version for our goals as it involves the Chang PS condition.

Let $ \varphi \in \Lip(F, \rr) $ be a function.  
Let $ \mathcal{U} $ be an open neighborhood of $ zero $ and $ f \notin \overline{\mathcal{U}} $ be given such that for a real number $ m $
\begin{equation}\label{ine}
\max \{ \varphi(0), \varphi(f)\} < m \leqq \inf_{bd \, \mathcal{U}} \varphi.
\end{equation}
Let $$\Gamma \coloneq \{ \gamma \in C ([0,1]; F) : \gamma (0)=  0,\gamma(1)=f \}$$ be the  space of continuous paths joining $ 0 $ and $ f $. Consider
the Fr\'{e}chet space $C ([0,1]; F)$
 with the family of seminorms $$\sn \gamma \sn_{\Gamma}^{i'} = \sup_{t \in [0,1]} \sn \gamma (t) \sn^i_F.$$
 Let 
 \begin{equation}\label{m2}
 	d_{\Gamma} (\ell,\gamma) = \sum_{i'=0}^{\infty} \dfrac{\sn \ell -\gamma \sn^{i'}_{\Gamma}}{1+\sn \ell -\gamma \sn^{i'}_{\Gamma}}
 	\end{equation}
 be the metric that defines the same topology.	
 We can easily varify that $\Gamma$ is a closed subset of $C ([0,1]; F)$ so it is a complete metric space with
 the induced metric $ d_{\Gamma} $.
 
 In the sequel we will apply the following weak form of  Ekeland's variational principle.
 \begin{theorem}\label{ek}\cite[Theorem 1 bis.]{ik}
 	Let $(X, \sigma)$ be a complete metric space.
 	Let a functional $f : X \rightarrow (-\infty, \infty]$ be semi-continuous, bounded
 	from below and not identical to $+\infty$. 
 	Then, for any $ \epsilon > 0 $ there exists $x \in X$ such that
 	\begin{enumerate}
 		\item $f(x) < \inf_{X}f + \epsilon$
 		\item $f(x) \leqq f(y) + {\epsilon} \sigma (x,y), \quad \forall y \neq x \in X$.
 	\end{enumerate}
 \end{theorem}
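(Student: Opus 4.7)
The plan is to prove Ekeland's principle (in its weak form) by the standard partial-order construction combined with Cantor's nested-set theorem. First, I introduce a binary relation $\preceq$ on $X$ by declaring $y \preceq x$ if and only if $f(y) + \epsilon\, \sigma(x,y) \leq f(x)$. Reflexivity follows since $\sigma(x,x)=0$; transitivity comes from adding two such inequalities and invoking the triangle inequality on $\sigma$; antisymmetry, because $y \preceq x$ together with $x \preceq y$ forces $\sigma(x,y)=0$. For each $x$ the lower section $S(x) := \{ y \in X : y \preceq x \}$ is nonempty (it contains $x$) and closed, because $f$ is lower semi-continuous and $\sigma$ is continuous.

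Next I pick $x_0 \in X$ with $f(x_0) < \inf_X f + \epsilon$ (possible by the definition of the infimum) and build a chain $x_0 \succeq x_1 \succeq x_2 \succeq \cdots$ by choosing, for each $n$, a near-minimizer
\[
x_{n+1} \in S(x_n) \quad\text{with}\quad f(x_{n+1}) \leq \tfrac{1}{2}\bigl(f(x_n) + \inf_{S(x_n)} f\bigr).
\]
By transitivity the sections nest: $S(x_0) \supseteq S(x_1) \supseteq \cdots$. Writing $a_n := f(x_n)$ and $b_n := \inf_{S(x_n)} f$, the sequence $(a_n)$ is non-increasing while $(b_n)$ is non-decreasing; both are bounded since $f$ is bounded below, and the inequality $b_n \leq a_{n+1} \leq (a_n+b_n)/2$ forces them to a common limit, so $a_n - b_n \to 0$.

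The crucial diameter estimate then goes as follows: for any $y \in S(x_{n+1})$, transitivity places $y$ in $S(x_n)$ and so $f(y) \geq b_n$; combined with $y \preceq x_{n+1}$ this yields
\[
\epsilon\, \sigma(x_{n+1}, y) \leq f(x_{n+1}) - f(y) \leq a_{n+1} - b_n \leq \tfrac{1}{2}(a_n - b_n).
\]
Hence $\operatorname{diam} S(x_{n+1}) \to 0$, and Cantor's intersection theorem in the complete metric space $X$ produces a unique point $\bar x \in \bigcap_n S(x_n)$.

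Finally, $\bar x \in S(x_0)$ delivers conclusion (1): $f(\bar x) \leq f(x_0) < \inf_X f + \epsilon$. For (2), if some $y \neq \bar x$ violated the inequality, then $f(y) + \epsilon\, \sigma(\bar x,y) < f(\bar x)$, i.e.\ $y \prec \bar x$; transitivity then places $y \in S(x_n)$ for every $n$, forcing $y = \bar x$, a contradiction. The one step that requires genuine care is the diameter estimate: both the near-minimizing choice of $x_{n+1}$ inside $S(x_n)$ and the monotone-convergence argument establishing $a_n - b_n \to 0$ are essential, and without them the nested-set construction would stall.
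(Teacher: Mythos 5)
Your proof is correct: the partial order $y \preceq x$ iff $f(y)+\epsilon\,\sigma(x,y)\leq f(x)$, the near-minimizing chain, the estimate $a_n-b_n\to 0$ giving $\operatorname{diam} S(x_n)\to 0$, and Cantor's intersection theorem together yield both conclusions, with only trivial points left implicit (that the choice of $x_{n+1}$ is always possible, e.g.\ $x_{n+1}=x_n$ when $a_n=b_n$, and that ``semi-continuous'' is to be read as lower semi-continuous). The paper itself gives no proof of this statement --- it is quoted from Ekeland's article --- and your argument is essentially the classical one found in that cited source, so there is nothing to reconcile.
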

The idea of the proof of the following mountain pass theorem is to define a function
$\Psi(\gamma) = \max_{[0,1]}\varphi(\gamma(t))$ on $C ([0,1]; F)$ and show that it is locally Lipschitz.
Then we find almost minimizers with some certain conditions by using Ekeland's variational principle. We pick a
sequence of these points on $\Gamma$ and associate it with a sequence on $F$ which satisfies the requirement of 
the Chang PS-condition for $ \varphi $. The limit of a subsequence of this sequence on $F$ is a critical point
of $ \varphi $.
\begin{theorem}\label{mp}
	Suppose $ \varphi \in \Lip(F,\rr) $ satisfies~\eqref{ine} for a real number $ m $. Let 
	\begin{equation}\label{c}
	c = \inf_{\gamma \in \Gamma} \max_{t \in [0,1]} \varphi (\gamma (t)) \geqq m. 
	\end{equation} 
	Then there exists a sequence $ (f_n) \subset F $	such that $ \varphi(f_n) \rightarrow c $ and satisfies~\eqref{1}.
	Moreover, if  $ \varphi $ satisfies the Chang PS condition then $ c $ is a critical value of $ \varphi $.
\end{theorem}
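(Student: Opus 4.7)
The plan is to introduce the minimax functional $\Psi(\gamma) := \max_{t \in [0,1]} \varphi(\gamma(t))$ on the complete metric space $(\Gamma, d_\Gamma)$, apply Ekeland's variational principle (Theorem~\ref{ek}) to produce almost-minimizing paths $\gamma_n$, and then extract from each $\gamma_n$ a point where the path maximum is attained so as to obtain the required sequence in $F$.

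First I verify that $\Psi$ is locally Lipschitz with respect to $d_\Gamma$: every $\gamma \in \Gamma$ has compact image, on which $\varphi$ is Lipschitz in each seminorm $\sn \cdot \sn^i_F$, and the defining formula~\eqref{m2} transfers these bounds to $\Psi$. Since any continuous path from $0 \in \mathcal{U}$ to $f \notin \overline{\mathcal{U}}$ must meet $bd\, \mathcal{U}$, \eqref{ine} gives $\Psi \geqq m$ on $\Gamma$, so $\inf_\Gamma \Psi = c$. Applying Theorem~\ref{ek} with $\varepsilon = \varepsilon_n \downarrow 0$ yields $\gamma_n \in \Gamma$ with $\Psi(\gamma_n) < c + \varepsilon_n$ and $\Psi(\eta) \geqq \Psi(\gamma_n) - \varepsilon_n d_\Gamma(\gamma_n, \eta)$ for all $\eta \in \Gamma$.

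The technical core is to show that there exist $t_n \in M_n := \{t : \varphi(\gamma_n(t)) = \Psi(\gamma_n)\}$ with $f_n := \gamma_n(t_n)$ satisfying $\lambda_{\varphi,\phi}(f_n) \to 0$ for every $\phi \in \Phi$. I argue by contradiction: suppose that along a subsequence there are $\phi_0 \in \Phi$ and $\delta > 0$ with $\lambda_{\varphi,\phi_0}(\gamma_n(t)) \geqq \delta$ for all $t \in M_n$. By weak-$^*$ upper semi-continuity of $\partial\varphi$ (Lemma~\ref{pis}) this persists on a neighborhood $V_n$ of $M_n$ in $(0,1)$ — the endpoints are excluded because $\varphi(0),\varphi(f) < c$. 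For each $t \in V_n$ the weak-$^*$-closed convex set $\partial\varphi(\gamma_n(t))$ is disjoint from the weak-$^*$-open symmetric convex set $\{y \in F' : \rho_{\phi_0}(y) < \delta/2\}$, so Hahn--Banach separation in $(F', \mathrm{weak}^*)$ (whose continuous dual is $F$) produces a direction $v_t \in \mathrm{span}(\phi_0)$ with $\langle y, v_t\rangle \leqq -\delta/2$ for all $y \in \partial\varphi(\gamma_n(t))$; Lemma~\ref{compact}(c) then gives $\varphi^{\circ}(\gamma_n(t), v_t) \leqq -\delta/2$. Patching the $v_t$ through a finite partition of unity on $V_n$ (using upper semi-continuity once more) produces a continuous field $v_n: [0,1] \to \mathrm{span}(\phi_0)$, supported in $V_n$ and vanishing at $\{0,1\}$, with $\varphi^{\circ}(\gamma_n(t), v_n(t)) \leqq -\delta/4$ on $M_n$. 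The perturbed path $\eta_n(t) := \gamma_n(t) + s\, v_n(t)$ lies in $\Gamma$; Lemma~\ref{mean} together with compactness of $[0,1]$ gives $\Psi(\eta_n) \leqq \Psi(\gamma_n) - s\delta/8$ for $s > 0$ small, while $d_\Gamma(\eta_n,\gamma_n) \leqq Ks$ for a constant $K$ depending only on $\phi_0$. Fixing such an $s$ then contradicts the Ekeland inequality as $\varepsilon_n \to 0$.

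With $(f_n)$ in hand satisfying $\varphi(f_n) = \Psi(\gamma_n) \to c$ and~\eqref{1}, the first conclusion is proved. If additionally $\varphi$ satisfies the Chang PS condition at $c$, then $(f_n)$ has a convergent subsequence whose limit $f^*$ is a critical point (by the observation immediately following the Chang PS definition) with $\varphi(f^*) = c$, so $c$ is a critical value. The main obstacle is the perturbation step: constructing a continuous descent field $v_n$, keeping the endpoints fixed, and converting finite-dimensional size bounds into a $d_\Gamma$-estimate that is uniform in $n$. The finite-dimensionality of $\mathrm{span}(\phi_0)$ is essential here, because only finitely many seminorms $\sn \cdot \sn_F^i$ effectively control $v_n(t)$, so the infinite sum defining $d_\Gamma$ contributes a manageable tail.
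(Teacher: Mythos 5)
Your overall architecture (the minimax functional $\Psi$, its local Lipschitz property, Ekeland's principle on $(\Gamma,d_\Gamma)$, extraction of maximum points of almost-minimizing paths, and the final use of the Chang PS condition) is the same as the paper's, but the technical core has a genuine gap in its quantifier structure. The theorem demands a \emph{single} sequence $(f_n)$ with $\lambda_{\varphi,\phi}(f_n)\to 0$ for \emph{every} $\phi\in\Phi$ simultaneously. The negation of ``there exist $t_n\in M_n$ such that for all $\phi$, $\lambda_{\varphi,\phi}(\gamma_n(t_n))\to 0$'' is that \emph{every} choice of $(t_n)$ fails for \emph{some} $\phi$ depending on that choice; it is not the existence of one fixed $\phi_0$, one $\delta>0$ and a subsequence along which $\lambda_{\varphi,\phi_0}(\gamma_n(t))\geqq\delta$ for \emph{all} $t\in M_n$. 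So your contradiction hypothesis is far stronger than the true negation, and what your descent construction can actually deliver is only: for each fixed $\phi_0$ and $\delta>0$, eventually some $t\in M_n$ has $\lambda_{\varphi,\phi_0}(\gamma_n(t))<\delta$, i.e.\ $\min_{t\in M_n}\lambda_{\varphi,\phi_0}(\gamma_n(t))\to 0$ for each $\phi_0$ \emph{separately}, with the good point depending on $\phi_0$. Since $\Phi$ consists of all finite subsets of $F$, there is no countable cofinal subfamily and no diagonal argument that assembles these $\phi$-dependent points into one sequence satisfying~\eqref{1}; without such a sequence the Chang PS condition cannot even be invoked for the ``moreover'' part. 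This is precisely the price of restricting the descent directions to $\mathrm{span}(\phi_0)$. The paper avoids the problem by proving the direction-uniform statement~\eqref{d}: using the bound~\eqref{key} and a covering/partition-of-unity perturbation inside $C_{0}([0,1],F)$ with \emph{arbitrary} unit directions $h_t\in F$, it produces one point $s_n\in M(\gamma_n)$ with $\varphi^{\circ}(\gamma_n(s_n);h)\geqq-\alpha_n\sn h\sn^i_F$ for all $h\in F$, and this single point controls $\lambda_{\varphi,\phi}(f_n)$ for all $\phi$ at once.

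A secondary problem is quantitative. The estimate $d_\Gamma(\eta_n,\gamma_n)\leqq Ks$ is not available for the metric~\eqref{m2}: the normalized sum of seminorms is not uniformly Lipschitz in $s$, and since on $\mathrm{span}(\phi_0)$ the seminorms $\sn\cdot\sn^i_F$ of $v_n$ may grow arbitrarily fast in $i$, one only gets $d_\Gamma(\gamma_n+sv_n,\gamma_n)\to 0$ as $s\to 0$ with no linear rate. Moreover, the admissible range of $s$ in your mean-value descent estimate depends on $n$ through $\gamma_n$ (via the upper semi-continuity radii along its image), so ``fixing such an $s$'' independently of $n$ and then letting $\varepsilon_n\to 0$ requires an argument linking $s$, $\varepsilon_n$ and $\delta$ that you have not supplied. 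The paper's route through $\Psi^{\circ}(\gamma_n;\eta)$ and~\eqref{key}, rather than through finite perturbations of the path, is what removes this dependence.
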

\begin{proof}
	Define the function $ \Psi : C ([0,1]; F) \rightarrow \rr $ by
	\begin{equation}
	\Psi(\gamma) = \max_{[0,1]}\varphi(\gamma(t)).
	\end{equation}
	Let $ \gamma \in C([0,1],F) $, for any $ t \in [0,1] $ there are positive numbers $ r_t, c_t  $ such that
	\begin{equation}
	\forall f_1\, ,f_2 \in B_{r_t} (\gamma (t)),\quad \mid \varphi (f_1) - \varphi(f_2) \mid \leqq c_t \sn f_1 - f_2 \sn^1_F .
	\end{equation}
	The family $ \{ B_{r_t}(\gamma(t))\}_{t \in [0,1]} $ is an open covering of the compact set $ \gamma([0,1]) $, therefore, there is a finite sub-covering  $ \{ B_{r_{t_j}}(\gamma(t_j))\}_{j = 1, \cdots, k }$ of $ \gamma([0,1]) $. Hence by the Lebesgue's number lemma there exists a positive number $ r $
	such that for any $ f \in \gamma([0,1]) $ there exists some $ 1 \leqslant j \leqslant k $ such that 
	$ B_r(f) \subset B_{r_{t_j}}(\gamma(t_j)) $. 
	
	Set $ c_{\gamma} \coloneq \max_{1 \leqslant j \leqslant k } c_{t_j}$. Therefore
	\begin{equation}
	\forall t \in [0,1],\, \forall f_1,f_2 \in B_r (\gamma (t)),\quad \mid \varphi (f_1) - \varphi (f_2) \mid \leqq c_{\gamma} \sn f_1-f_2 \sn ^1_F .
	\end{equation}
	If $ \gamma_1,\gamma_2  \in C([0,1],F)$ satisfy
	$$\sn \gamma_j - \gamma \sn_{\Gamma}^{i'}  < r(\forall i' \in \nn),\, j=0,1. $$
	Then $$
	\mid \Psi (\gamma_1)- \Psi(\gamma_2)\mid = \mid \max_{t \in [0,1]} \varphi (\gamma_1 (t))- \max_{t \in [0,1]} \varphi (\gamma_2)(t) \mid \leqq
	$$
	$$
 \max_{t \in [0,1]} \mid \varphi (\gamma_1(t))-\varphi (\gamma_2(t)) \mid  \leqq	c_{\gamma}\max_{t \in [0,1]} \sn \gamma_1(t) -\gamma_2(t) \sn ^1_F \leqq 
$$
$$
 c_{\gamma}\max_{t \in [0,1]} \sn \gamma_1(t) -\gamma_2(t) \sn ^i_F = c_{\gamma} \sn \gamma_1 - \gamma_2 \sn_{\Gamma}^{i'}, \forall i \in \nn
$$
Therefore $ \Psi $ is locally Lipschitz.
	
	Let $ \varepsilon_j $ be a sequence of positive numbers converging to $ zero  $ and $  (\eta_j ) \subset C ( [0,1];F) $  a sequence such that $ \sn \eta_j - \gamma \sn_{\Gamma}^{i'} (\forall i' \in \nn) \rightarrow 0 $ as $ j \rightarrow \infty $ and for $ \eta \in C([0,1];F) $ 
	$$
	\Psi^{\circ}(\gamma;\eta) = \lim_{j \rightarrow \infty}\dfrac{\Psi(\eta_j + \varepsilon_j\eta) - \Psi(\eta_j)}{\varepsilon_j}.
	$$
	Set $$ M(\gamma) \coloneq \{ s \in [0,1] \mid \varphi (\gamma(s)) = \Psi (\gamma) \}.$$ For any 
	$ s_j \in M (\eta_j + \varepsilon_j\eta), j = 1,2,\cdots $ it follows that 
	\begin{equation}
	\dfrac{\Psi(\eta_j + \varepsilon_j\eta) - \Psi(\eta_j)}{\varepsilon_j} \leqq \dfrac{\varphi(\eta_j(s_j) + \varepsilon_j\eta(s_j)) - \varphi(\eta_j(s_j))}{\varepsilon_j}.
	\end{equation}
	By the mean value theorem, there exist $ \epsilon_j \in (0,1) $ and $ x^*_j \in \partial \varphi (\eta_j(s_j) +  \epsilon_j\varepsilon_j \eta (s_j) )$ such that
	\begin{equation}
	\dfrac{\varphi(\eta_j(s_j) + \varepsilon_j\eta(s_j)) - \varphi(\eta_j(s_j))}{\varepsilon_j} =  \langle x^*_j , \eta (s_j) \rangle , \quad j=1,2,\cdots
	\end{equation}
	The sequence $ ( s_j) $ has a convergent sequence, denoted again by   $ ( s_j) $, suppose that $ s_j \rightarrow s $.
	Then $ \eta_j(s_j) + \epsilon_j\varepsilon_j \eta (s_j) \rightarrow \gamma (s)$. By  Lemma~\ref{compact}(e) the sequence $ ( x^*_j) $ has a $ w^* $-cluster point
	$ x^* \in \partial \varphi (\gamma (s)) $. So we have $ \langle x_j^{*} ,\eta(s)\rangle \rightarrow \langle x^{*} ,\eta(s)\rangle$ and then 
	\begin{equation}
	\Psi^{\circ}(\gamma;\eta)\leqq \lim_{j \rightarrow \infty}\langle x_j^{*} ,\eta(s_j)\rangle \leqq \lim_{j \rightarrow \infty}\langle x_j^{*} ,\eta(s_j) - \eta(s)\rangle + \lim_{j \rightarrow \infty}\langle x_j^{*} ,\eta(s)\rangle.
	\end{equation}
	Since $ s_j \in M(\eta_j+\varepsilon_j\eta) $, we have
	$$
	\varphi (\eta_j(s_j) + \varepsilon_j \eta(s_j)) \geqq \varphi(\eta_j(t)+\varepsilon_j\eta(t)), \quad \forall t \in [0,1].
	$$
	Letting $ t \rightarrow \infty  $ yields $$ \varphi(\gamma(s))\geqq \varphi(\gamma(t)),  \forall t \in [0,1] $$  and hence $ s \in M(\gamma) $, 
	therefore 
	\begin{equation}\label{key}
	\Psi^{\circ}(\gamma;\eta)\leqq \max_{s \in M(\gamma)}\varphi^{\circ}(\gamma(s);\eta(s)), \, \forall \eta \in C([0,1],F).
	\end{equation}
	Set $$ C_{0}([0,1],F) \coloneq \Big \{\eta \in C([0,1],F); \forall t \in \{0,1 \}, \eta(t)=0  \Big \}.$$ Suppose for some $ \gamma \in C([0,1],F) $ we have $ M(\gamma) \subset (0,1) $ and there exists $ \varepsilon > 0 $ such that for $ \eta \in C_{0}([0,1],F) $
	\begin{equation}
	\Psi^{\circ}(\gamma;\eta) \geqq -\varepsilon \sn \eta \sn_{\Gamma}^{i'}(\forall i' \in \nn).
	\end{equation}
	We prove that there exists $ s \in M (\gamma) $ such that $ \forall h \in F $
	\begin{equation}\label{d}
	\varphi^{\circ}(\gamma(s);h) \geqq -\varepsilon \sn h \sn^i_F (\forall i \in \nn).
	\end{equation}
	If there there is no such $ s $ then for any $ t \in M(\gamma) $ there exits $ h_t \in F $ with $ \sn h_t \sn^i_F  =1 (i \in \nn)$
	such that $ \varphi^{\circ}(\gamma(t);h_t) < -\varepsilon $. The continuity of $ \gamma $ and the upper semi-continuity of
	$ \varphi^{\circ} $ implies that for any $ t \in M(\gamma) $ there exits $ h_t \in F $ with $ \sn h_t \sn^i_F =1 (\forall i \in \nn) $
	and $ \varepsilon_t' >0 $ such that
	\begin{equation}
	\varphi^{\circ}(\gamma (s);h_t) < - \varepsilon, \, \forall s \in B_{\varepsilon_t'}(t)= \{ s \in [0,1]; \mid s-t \mid < \varepsilon_t '\}.
	\end{equation}
	The family $ \{ B_{\varepsilon'_t}(t)\}_{t \in M(\gamma)} $ is an open covering of $ M(\gamma) $. Since $ M(\gamma) $ is compact there exist 
	$ t_1,\cdots ,t_k \in M(\gamma) $ such that $ M(\gamma)\subset \bigcup_{j=1}^{j=k}B_{\varepsilon_{t_j}'}(t_j) $. Since $ M(\gamma) $
	is a subset of $ (0,1) $ it follows that $ B_{\varepsilon_t'}(t) $ does not contain $ \{ 0,1\} $ for all $ t \in M(\gamma) $. Thereby
	\begin{equation}\label{sum}
	\{\bigcup_{j=1}^{j=k}B_{\varepsilon_{t_j}'}(t_j)\}\cup \{ [0,1] \setminus M(\gamma)\} = [0,1].
	\end{equation}
	Define 
	\begin{equation}
	\mu(t) = \dfrac{\Sigma_{j=1}^{j=k} h_{t_j}d_j(t)}{\Sigma_{j=0}^{j=k}d_j(t)}.
	\end{equation}
	Where
	\begin{equation}
	d_0 (t) = \min_{s \in M(\gamma) } \mid t-s \mid,\quad t \in [0,1],
	\end{equation}
	and 
	\begin{equation}
	d_j (t) = \min_{s \in [0,1] \setminus B_{\varepsilon_{t_j}'}(t_j) } \mid t-s \mid,\quad t \in [0,1], \, j=1,\cdots,k.
	\end{equation}
	By~\eqref{sum} it follows that $\Sigma_{j=0}^{j=k}d_j(t) >0$.
	
	By the above arguments we obtain $ \eta_{0} \in C_{0}([0,1];F) $ with $ \sn \eta_{0} \sn_{\Gamma}^{i'} \leqq 1 $.
	Since $ g \mapsto \varphi^{\circ} (f,g) $ is sublinear in $ g $,
	\begin{equation}
	\varphi^{\circ}(\gamma(t);\eta_{0}(t)) \leqq \dfrac{\Sigma_{j=1}^k d_j(t) \varphi^{\circ}(\gamma(t); h_{t_j})}{\Sigma_{j=0}^k d_j(t)}.
	\end{equation}
	Then for any $t \in M(\gamma)  $ we get 
	\begin{equation}
	d_0(t) = 0, d_j(t)>0 (j\neq 0) \Rightarrow \varphi^{\circ}(\gamma(t);h_{t_j}) < -\varepsilon.
	\end{equation}
	Therefore by Equation~\eqref{key} we have 
	\begin{equation*}
	\Psi^{\circ}(\gamma;\eta_0)\leqq \max_{s \in M(\gamma)}\varphi^{\circ}(\gamma(s);\eta_0(s)) < -\varepsilon \sn \eta_0 \sn_{\Gamma}^{i'}
	\end{equation*}
	which is a contradiction.
	
	Since $ \mathcal{U} $ separates 0 and $ f $, for any $ \gamma \in \Gamma $,
	\begin{equation}
	\gamma([0,1]) \bigcap bd \,\mathcal{U} \neq \emptyset.
	\end{equation} 
	Then by the assumptions of the theorem 
	\begin{equation}\label{b}
	\max_{t \in [0,1]} \varphi( \gamma(t)) \geqq \inf_{bd \, \mathcal{U}} \varphi \geqq m > \max \{ \varphi (\gamma(0)), \varphi (\gamma(1))\}
	\end{equation}
	Therefore for every $ \gamma \in \Gamma $,
	\begin{equation}
	M(\gamma)= \{s \in [0,1]; \varphi(\gamma(s)) = \max_{t \in [0,1]} \varphi(\gamma(t)) \} \subset (0,1).
	\end{equation}
	The restriction of $ \Psi $ to $ \Gamma $ is again locally Lipschitz and by~\eqref{b} and~\eqref{c} is bounded from below. Let $ (\alpha_n) $ be a sequence of positive numbers converging to $ zero $. By Ekeland's variational principle~\ref{ek} there exists a sequence $ (\gamma_n) \subset \Gamma $ such that 
	\begin{equation}
	c \leqq \Psi(\gamma_n) \leqq c+ \alpha_n
	\end{equation}
	and 
	\begin{equation}
	\Psi(\varrho)>\Psi(\gamma_n)-\alpha_n d_{\Gamma}(\varrho ,\gamma_n), \, \varrho \neq \gamma_n , n =1,2,\cdots
	\end{equation}
	Therefore for any $ \eta \in C_{0}([0,1];F) $ we obtain 
	\begin{equation}
	\Psi^{\circ}(\gamma_n;\eta) \geqq \limsup_{t \rightarrow 0}\dfrac{\Psi(\gamma_n+t\eta)-\Psi(\gamma_n)}{t}\geqq -\alpha_n \sn \eta \sn_{\Gamma}^{i'}, \, n=1,2,\cdots 
	\end{equation}
	Then by Equation~\eqref{d}, there exists $ s_n \in M(\gamma_n) $ such that $ \varphi (\gamma_n(s_n) ) = \Psi(\gamma_n)$ and
	\begin{equation}
	\varphi^{\circ}(\gamma_n(s_n);h) \geqq -\alpha_n \sn h \sn^i_F (\forall i \in \nn), \forall h \in F, \, n=1,2,\cdots 
	\end{equation}
	Let $ f_n = \gamma_n (s_n)$ for $ n = 1,2,\cdots $, then $ (f_n)$ is the desired sequence and  we have $ \varphi(f_n) \rightarrow c $, moreover
	\begin{equation}
	0 \in \partial \varphi(f_n)+\alpha_n \overline{U^{\circ}},
	\end{equation}
	where $ U $ is a neighborhood of the origin in $ F $. 
By the Chang PS condition $(f_n)$ has a convergent subsequent, denoted again by $(f_n)$, with the limit $ z $. Then,
$$
\varphi (z) = \lim_{n \rightarrow \infty } \varphi (f_n) = c
$$
and $ \forall w \in F \&\,i\in \nn $
$$
\varphi^{\circ} (z;w) \geqq \limsup_{n \rightarrow \infty}\varphi^{\circ}(\gamma_n (s_n); w) \geqq -\lim _{n \rightarrow \infty}
\alpha_n \sn w \sn^i_F = 0.
$$
That is $ 0 \in \partial \varphi(z) $.
\end{proof}

\section{A global diffeomorphism theorem}

In this section we apply the mountain pass theorem of the previous section to obtain
a  global diffeomorphism theorem.  

\begin{lemma} \label{lem}
Let $ \varphi \in \Lip (F, \rr)$ and bounded from bellow. Then there exists a sequence $ (f_n) $ such that 
$ \lim_{n \rightarrow \infty} \varphi(f_n) =\inf_{F} \varphi $ and for all
$ \phi \in \Phi $
\begin{equation}
\lim_{n \rightarrow \infty} \lambda_{\varphi,\phi}(f_n) = 0.
\end{equation}
\end{lemma}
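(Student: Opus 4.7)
The plan is to recognize this statement as a routine Ekeland-style "almost-minimizer sequence" construction, exactly parallel to what is set up in the proof of Theorem~\ref{c1}. Since $\varphi$ is locally Lipschitz it is in particular continuous, and by hypothesis it is bounded below; the ambient metric $d_F$ defined in \eqref{metric} makes $F$ a complete metric space. Hence Ekeland's variational principle in the form of Theorem~\ref{ek} applies directly on all of $F$.

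Concretely, I would pick any sequence $\varepsilon_n\downarrow 0$ and apply Theorem~\ref{ek} to $\varphi$ on $(F,d_F)$ to obtain $f_n\in F$ satisfying
\begin{equation*}
\varphi(f_n)<\inf_F\varphi+\varepsilon_n,\qquad \varphi(f_n)\leqq \varphi(x)+\varepsilon_n\, d_F(x,f_n)\quad \forall x\in F.
\end{equation*}
The first inequality gives $\varphi(f_n)\to\inf_F\varphi$ immediately. The second inequality says that the auxiliary function $\widetilde{\varphi}_n(x):=\varphi(x)+\varepsilon_n\, d_F(x,f_n)$ attains its global minimum at $f_n$. By Lemma~\ref{compact}(g) we get $0\in\partial\widetilde{\varphi}_n(f_n)$, and by the sum rule in Lemma~\ref{compact}(h) together with Lemma~\ref{compact}(f),
\begin{equation*}
0\in \partial \varphi(f_n)+\varepsilon_n\,\partial[d_F(\cdot,f_n)](f_n)\subseteq \partial\varphi(f_n)+\varepsilon_n B'_F,
\end{equation*}
where $B'_F$ is the closed unit ball of $F'$, exactly as in the concluding step of the proof of Theorem~\ref{c1}.

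It then remains to translate the inclusion $0\in\partial\varphi(f_n)+\varepsilon_n B'_F$ into a bound on $\lambda_{\varphi,\phi}(f_n)$. Choosing $y_n\in\partial\varphi(f_n)$ with $-y_n/\varepsilon_n\in\partial[d_F(\cdot,f_n)](f_n)$, for any finite $\phi\subset F$ and $v\in\phi$ the subdifferential characterization in Lemma~\ref{compact}(c) together with the 1-Lipschitz property of the distance $d_F(\cdot,f_n)$ yields $|\langle y_n,v\rangle|\leqq \varepsilon_n\cdot c(\phi)$ for a constant $c(\phi)$ depending only on the finite set $\phi$ (this is exactly the step invoked, without elaboration, in the passage from $0\in\partial\varphi(g)+\varepsilon B'_F$ to $\lambda_{\varphi,\phi}(g)\leqq\varepsilon$ in Theorem~\ref{c1}). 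Therefore $\rho_\phi(y_n)\to 0$, so $\lambda_{\varphi,\phi}(f_n)\leqq \rho_\phi(y_n)\to 0$ for every $\phi\in\Phi$, completing the proof.

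The main obstacle, and the only place where real care is needed, is the final translation step: the subdifferential of the Fr\'echet metric $d_F(\cdot,f_n)$ at $f_n$ must be controlled seminorm by seminorm through the $\rho_\phi$ topology on $F'$. This is not deep, but it is the only place where the Fr\'echet structure differs nontrivially from the Banach case, and it is essentially the reason the entire section uses the symbol $B'_F$ without elaboration; I would handle it by exploiting the triangle inequality bound $d_F(x+tv,f_n)-d_F(x,f_n)\leqq d_F(0,tv)$ to bound $d_F(\cdot,f_n)^\circ(f_n;v)$ uniformly in $n$ in terms of the seminorms $\sn\cdot\sn_F^i$, which is all that $\rho_\phi$ sees on a finite set $\phi$.
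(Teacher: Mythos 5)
Your first half (Ekeland's principle on $(F,d_F)$ applied to $\varphi$, giving $\varphi(f_n)\to\inf_F\varphi$ and the perturbed minimality of $f_n$) agrees with the paper, but the second half is not the paper's argument for this lemma and it has a genuine gap at exactly the step you flagged. You apply Lemma~\ref{compact}(g),(h) to $\widetilde{\varphi}_n=\varphi+\varepsilon_n d_F(\cdot,f_n)$ and then claim that the subgradients coming from the penalty term are $\rho_\phi$-small, i.e.\ $|\langle y_n,v\rangle|\leqq\varepsilon_n c(\phi)$ for each $v\in\phi$. Two problems. First, $d_F(\cdot,f_n)$ is $1$-Lipschitz only with respect to the metric $d_F$ itself, not with respect to the seminorms $\sn\cdot\sn^i_F$, so it need not lie in $\Lip(F,\rr)$ in the sense in which the calculus of Section~1 (in particular the sum rule (h)) is stated and used; its application to $\widetilde{\varphi}_n$ is therefore not covered. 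Second, and decisively, your proposed quantitative fix fails: the triangle-inequality estimate bounds the generalized directional derivative of the penalty at $f_n$ in direction $v$ only by $\lim_{t\downarrow 0}d_F(tv,0)/t$, and for the metric~\eqref{metric} (even after inserting the customary weights $2^{-i}$ so that the series converges) this limit equals $\sum_i 2^{-i}\sn v\sn^i_F$, which is $+\infty$ for typical directions in a non-normable Fr\'echet space (e.g.\ in $\rr^{\nn}$ with $\sn x\sn^i=\max_{j\leqq i}|x_j|$ take $x_j=2^j$). Since $\Phi$ consists of \emph{all} finite subsets of $F$, a set $\phi$ may contain such a direction, and then no finite constant $c(\phi)$ is obtained; the sum rule only provides \emph{some} decomposition $0=y_n+\varepsilon_n z_n$, and you cannot choose $z_n$ to be small, so $\rho_\phi(y_n)\to 0$ is not established. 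For the same reason the symbol $B'_F$ (a ``unit ball'' in the dual of a non-normable space) has no intrinsic meaning; the concluding step of Theorem~\ref{c1} that you lean on cannot be imported as a black box.

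The paper's own proof of this lemma takes a different route precisely to avoid differentiating the metric: from the Ekeland inequality it extracts a lower bound on $\varphi^{\circ}(f_n;\cdot)$ and then applies a Hahn--Banach separation in $F\times\rr$ to the convex sets $\{(h,t): t>\varphi^{\circ}(f_n;h)\}$ and $\{(h,t): t<-\epsilon_n\sn h\sn^i_F\}$, producing a \emph{linear} functional $w^{*}_n$ with $w^{*}_n\leqq\varphi^{\circ}(f_n;\cdot)$ (hence $w^{*}_n\in\partial\varphi(f_n)$ by Lemma~\ref{compact}(d)) together with the seminorm bound $|w^{*}_n(h)|\leqq\epsilon_n\sn h\sn^i_F$; a bound by a seminorm, rather than by the metric, is exactly what makes $\rho_\phi(w^{*}_n)\leqq\epsilon_n\sup_{v\in\phi}\sn v\sn^i_F\to 0$ immediate. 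To repair your argument you would have to reproduce such a seminorm-type estimate for the Ekeland penalty (which essentially amounts to redoing this separation step); as written, the passage from $0\in\partial\varphi(f_n)+\varepsilon_n\partial[d_F(\cdot,f_n)](f_n)$ to $\lambda_{\varphi,\phi}(f_n)\to 0$ does not go through.
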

\begin{proof}
Consider a sequence of positive nuumbers $ (\epsilon_n) $ converging to $ zero $. The function $\varphi  $
satisfies all assumptions of Ekeland variational principle~\ref{ek}, so we can find a sequence $ (f_n) $ such
that  $$ \varphi(f_n) < \inf_{F}\varphi + \epsilon_n $$ and 
\begin{equation}
\varphi(f) \geqq \varphi(f_n) - \epsilon_n d_{F}(f,f_n) , \forall f \neq f_n \in F.
\end{equation}
Assume $ f = f_n + t(g-f_n)  $ for some $ g \in F $ and a positive number $ t $, then we obtain
\begin{equation}
\dfrac{\varphi(f_n+t(g-f_n))-\varphi(f_n)}{t} \geqq -\epsilon_n d_F(g,f_n).
\end{equation}
Thus, for all $ g \in F $ if we let $ t \rightarrow 0 $ then
\begin{equation}
\varphi^{\circ}(f_n; g-f_n)  \geqq -\epsilon_n d_F(g,f_n).
\end{equation}
For a fixed $ f_n $ define the sets $$ \Theta_n \coloneq \{(h,t) \mid h \in F ; t > \varphi^{\circ}(f_n;h) \}$$
and $$ \Pi_{n} \coloneq \{ (h,t) \mid h\in F ; t < -\epsilon_n \sn h \sn^i_F (\forall i \in \nn) \}.$$ They are open convex sets with 
empty intersection so by Hahn-Banach separation theorem there exists a separating hyperplane determined by a functional $ \upsilon_n (h,t) = w_n(h)+ \alpha t $ for some $ \alpha \neq 0$, where
$ w_n $ is a linear functional on $ F $ such that $ w_n(0)=0 $. Let $w^{*}(h) = \dfrac{-1}{\alpha} w_n(h)$, then
$ \upsilon_n(h,  w^{*}(h)) =0 , \, \forall h \in F$. Thereby $  w^{*}(h) \leqq \varphi^{\circ}(f_n;h) $ hence by~\eqref{g}(g) we have $ w_n^{*}\in \partial \varphi(f_n) $. On the other hand $ \mid  w^{*}_n(h)  \mid \leqq \epsilon_n \sn h \sn^i (\forall i \in \nn) $ so for all $ \phi $ we have $ \lambda_{\varphi,\phi}(f_n) \leqq \rho_{\phi}(w^{*}) \leqq \epsilon_n$.
 Letting
$ \varepsilon_n \downarrow 0 $ completes the proof.
\end{proof}

\begin{theorem}\label{ch}
Let $ \imath : F \rightarrow [0, \infty )$ be a coercive locally Lipschitz function having the following two properties:
\begin{itemize}
\item $\imath(x)=0$ if and only if $x=0$,
\item $0 \in \partial \imath(y)$ if and only if $y =0$.
\end{itemize}
Further let $ \tau : E \rightarrow F$ be a local $ C^1_c $-diffeomorphism.  Suppose that for each $ f \in F $ the function $\jmath : F \rightarrow [0,\infty)$ given by
	\begin{gather*}
	\jmath (e) = \imath (\tau(e) - f ) 
	\end{gather*}
	satisfies the Chang Palais-Smale condition. Then $ \tau $ is a global diffeomorphism. 
\end{theorem}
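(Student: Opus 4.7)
The plan is to prove that $\tau$ is both surjective and injective; once that is done, the $C^1_c$-local diffeomorphism property globalises automatically. Both halves of the argument extract convergent sequences from the Chang Palais--Smale hypothesis on $\jmath$ and then transport the conclusion $0\in\partial\jmath$ back to $0\in\partial\imath$ via the chain rule (Lemma~\ref{chain}), where the second property of $\imath$ forces the argument to vanish.

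\emph{Surjectivity.} Fix $f\in F$. Since $\jmath(e)=\imath(\tau(e)-f)\geqq 0$, Lemma~\ref{lem} produces a sequence $(e_n)\subset E$ with $\jmath(e_n)\rightarrow\inf_{E}\jmath$ and $\lambda_{\jmath,\phi}(e_n)\rightarrow 0$ for every $\phi\in\Phi$. The Chang PS hypothesis extracts a subsequence converging to some $e_\ast\in E$, and the argument used just after the definition of the Chang PS condition (together with Lemma~\ref{smi}) gives $0\in\partial\jmath(e_\ast)$. Applying Lemma~\ref{chain} to $\jmath=\imath\circ(\tau-f)$ produces $y^\ast\in\partial\imath(\tau(e_\ast)-f)$ with $y^\ast\circ\dd\tau(e_\ast)=0$; because $\tau$ is a local $C^1_c$-diffeomorphism, $\dd\tau(e_\ast):E\rightarrow F$ is a topological isomorphism, so $y^\ast=0$, whence the hypothesis on $\imath$ forces $\tau(e_\ast)=f$.

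\emph{Injectivity.} Suppose $\tau(e_1)=\tau(e_2)=f$ with $e_1\ne e_2$; both points are global minima of $\jmath$ with value $0$. Choose an open neighbourhood $\mathcal{U}$ of $e_1$ on which $\tau$ is a diffeomorphism onto its image and with $e_2\notin\overline{\mathcal{U}}$. After translating by $-e_1$ so that the origin and $e_2-e_1$ play the role of the two endpoints, the only non-routine input for Theorem~\ref{mp} is the mountain-pass geometry $\inf_{bd\,\mathcal{U}}\jmath>0$. Establishing this strict positivity is the main obstacle, because a sphere in an infinite-dimensional Fr\'echet space is not compact and one cannot simply quote closedness of $\tau(bd\,\mathcal{U})$ together with a compactness argument. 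I plan to secure it by contradiction: if $\inf_{bd\,\mathcal{U}}\jmath=0$, pick $(e_k)\subset bd\,\mathcal{U}$ with $\jmath(e_k)\rightarrow 0$, apply Ekeland's principle (Theorem~\ref{ekf}) to $\jmath$ on $E$ starting at $e_k$ with parameter $\varepsilon_k=\sqrt{\jmath(e_k)}$, and use the Hahn--Banach separation step from the proof of Lemma~\ref{lem} to obtain $u_k\in E$ with $d_E(u_k,e_k)\leqq\sqrt{\jmath(e_k)}\rightarrow 0$, $\jmath(u_k)\rightarrow 0$ and $\lambda_{\jmath,\phi}(u_k)\rightarrow 0$; Chang PS then extracts $u_k\rightarrow e_\ast$ on a subsequence, so $e_k\rightarrow e_\ast\in bd\,\mathcal{U}$ (the boundary being closed), while the surjectivity argument forces $\tau(e_\ast)=f=\tau(e_1)$, contradicting the injectivity of $\tau|_{\overline{\mathcal{U}}}$ since $e_1\in\mathcal{U}$ is disjoint from $bd\,\mathcal{U}$. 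With the mountain-pass geometry in hand, Theorem~\ref{mp} delivers a critical point $e^\ast$ of $\jmath$ with $\jmath(e^\ast)\geqq m>0$; but the chain rule again forces $\tau(e^\ast)=f$ and hence $\jmath(e^\ast)=0$, the final contradiction.

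Combining the two parts, $\tau$ is a bijective $C^1_c$-local diffeomorphism, so its set-theoretic inverse restricts on each sufficiently small open set in $F$ to the $C^1_c$ local inverse supplied by the local diffeomorphism property. These local inverses glue into a global $C^1_c$ map $\tau^{-1}$, making $\tau$ a global $C^1_c$-diffeomorphism.
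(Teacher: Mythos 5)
Your proposal is correct, and its overall architecture coincides with the paper's: surjectivity is obtained exactly as in the paper (Lemma~\ref{lem}, then the Chang PS condition, then the chain rule of Lemma~\ref{chain} together with invertibility of $\dd\tau$ and the hypothesis $0\in\partial\imath(y)$ iff $y=0$), and injectivity is obtained by running Theorem~\ref{mp} for $\jmath(e)=\imath(\tau(e)-\tau(e_1))$ and deriving a contradiction from a critical point at a level $c\geqq m>0$. Where you genuinely depart from the paper is the verification of the mountain-pass geometry $\inf_{bd\,\mathcal{U}}\jmath\geqq m>0$. The paper deduces it from the openness estimate $B_{\alpha r}(\tau(e_1))\subset\tau(B_r(e_1))$, $r\in(0,\sigma)$, which (with local injectivity) only shows that $\tau(e)-\tau(e_1)$ stays outside a fixed metric ball of $F$ when $e\in bd\,B_{\mathbf{r}}(e_1)$; converting this into a uniform positive lower bound for $\imath(\tau(e)-\tau(e_1))$ tacitly requires $\imath$ to be bounded away from zero off metric balls, which is not an explicit hypothesis and does not follow from coercivity with respect to $\sn\cdot\sn^1_F$ alone, since the boundary is not compact. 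You instead prove strict positivity by contradiction: you feed an almost-minimizing boundary sequence into Theorem~\ref{ekf} with parameter $\sqrt{\jmath(e_k)}$, extract $\lambda_{\jmath,\phi}(u_k)\rightarrow 0$ via the separation step of Lemma~\ref{lem}, and use the Chang PS hypothesis to produce a boundary point $e_\ast$ with $\tau(e_\ast)=\tau(e_1)$, contradicting local injectivity. This costs one extra Ekeland/Chang-PS extraction but uses only the stated hypotheses ($\imath>0$ off zero plus Chang PS), so it is arguably a tighter treatment of this step than the paper's. One small repair: you appeal to injectivity of $\tau$ on $\overline{\mathcal{U}}$, which is not automatic from injectivity on $\mathcal{U}$; take $\mathcal{U}=B_r(e_1)$ with $\overline{B_r(e_1)}$ contained in a neighbourhood on which $\tau$ is a $C^1_c$-diffeomorphism (possible since $E$ is metrizable) and with $r$ small enough that $e_2\notin\overline{\mathcal{U}}$.
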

\begin{proof}
We need to show that $ \tau $ is surjective and bijective. Let $ e_1 \neq e_2 \in E $ if $ \tau(e_1)\neq \tau(e_2) $ we have nothing to prove. Assume $ \tau(e_1)=\tau(e_2)=l $. Since $ \tau $ is a local diffeomorphism it follows that it is an open map,
therefore, there exist $ \sigma ,\alpha > 0 $  such that 

\begin{equation}\label{s}
B_{\alpha r}(l) \subset \tau (B_r(e_1)), \, \forall r \in (0, \sigma).
\end{equation}

Let $ \mathbf{r} \in (0,\sigma) $ be the smallest number  such that $ e_2 \notin \overline {B_{\mathbf{r}}(e_1)} $. Consider the function $ \jmath (e) = \imath (\tau(e) - l) $, therefore $ \jmath (e_1) = \jmath(e_2) = 0 $.

 Without the loose of generality we can suppose $ e_1 = 0$. By~\eqref{s} for $ e \in bd \, B_{\mathbf{r}}(0) $ we have $ 0 < m \leqq \jmath (e) $.
 Thus, all conditions of Theorem~\ref{mp} hold so there exists $ ( e_n ) \subset E $ such that 
 $ \lim_{n \rightarrow \infty} \jmath (e_n) = c $ for
 some $ c \geqq m  $ characterized by~\eqref{c}.
  Since $ \jmath (e_n) $ satisfies the Chang PS condition it has a convergent subsequent, denoted again by $ (e_n) $,  with the limit $ h $. Therefore, $ h $ is a critical point so $ 0 \in \partial \jmath (h) $ and $ \tau(h) \neq l $ since 
  $ \lim_{n\rightarrow \infty} \jmath (e_n) = \jmath(h) = c  \geqq m >  0$. 

By the chain rule~\eqref{chain} we have $ \partial \jmath (h) \subset \partial \imath (\tau (h) -l)  \circ \dd \tau (h) $. 
Therefore, there exists $v \in \partial \imath (\tau (h) -l)$ such that $ 0 = v \circ  \dd \tau (h)$. Since $ \tau $ is
a local diffeomorphism it follows that $ v = 0 $. Therefore, by our assumption on $ \imath,\, \tau(h)-l $ must be $ zero $ which is a
contradiction.  

Let $ g \in F $ be given and consider the function $ \jmath (e) = \imath (\tau(e)-g) $. By Lemma~\ref{lem} there exists a sequence $ (f_n) $ such that 
 $ \lim_{n \rightarrow \infty} \jmath (f_n) =\inf_{E} \jmath $ and for all
 $ \phi \in \Phi $ we have
 \begin{equation}
 \lim_{n \rightarrow \infty} \lambda_{\jmath,\phi}(f_n) = 0.
 \end{equation}
 Since $ \jmath $ satisfies the Chang PS condition the sequence $ (f_n) $ has a convergent subsequent, denoted again by 
 $(f_n)$,
 with the limit $ p $ which is a critical point of $ \jmath $ so $ 0 \in \partial \jmath (p) $. By the chain rule~\eqref{chain} we have $ \partial \jmath (p) \subset \partial \imath (\tau (p) -g)  \circ \dd \tau (p) $. Thus, there exists $ \xi \in \partial \imath (\tau (p) -g) $ such that $ 0 = \xi \circ  \dd \tau (p)$. Since $ \dd \tau $ is invertible at $ p $ we have $ \xi =0 $. Therefore by our assumption on $ \imath, \, \tau(p)=g $.  
\end{proof}
\begin{remk}
In~\cite{gutu} the analogue of the theorem for Banach spaces is obtained, where the applied auxiliary function  
is $ \dfrac{1}{2} \sn \cdot \sn^2 $ and it satisfies the weighted Chang PS condition. The results of~\cite{gutu} may
 also work with the type auxiliary function that we use. Nevertheless, we may attempt to extend Theorem~\eqref{ch} for 
 auxiliary functions that satisfy the weighted Chang PS condition which of course requires an appropriate mountain pass theorem. 
 \end{remk}
\bibliographystyle{amsplain}

\end{document}